\def\nn{\nonumber}
\def\a{\alpha} \def\b{\beta}  
    \def\g{\gamma}
\def\G{\Gamma}  
 \def\th{\theta}    
 \def\m{\mu}  
\def\r{\rho}  \def\s{\sigma} 
 \def\om{\omega}
\def\cP{{\cal P}}
\def\cQ{{\cal Q}}
\newtheorem{theorem}{Theorem}
\newtheorem{lemma}[theorem]{Lemma}
\newcommand{\rdown}[1]{{\left\lfloor #1\right \rfloor}}
\newcommand{\brac}[1]{\left(#1\right)}
\newcommand{\bfrac}[2]{\left(\frac{#1}{#2}\right)}
\newcommand{\set}[1]{\left\{#1\right\}}
\def\E{\mathbb{E}}
\def\Pr{\mathbb{P}}
\def\cF{{\cal F}}
\newcommand{\ignore}[1]{}
\def\cC{{\mathcal C}}
\newcommand{\beq}[2]{\begin{equation}\label{#1}#2\end{equation}}
\newcommand{\mults}[1]{\begin{multline*}#1\end{multline*}}
\def\dist{\text{dist}}
\def\SMA{{\bf SMALL}}
\def\bm{{\bf m}}
\def\bM{{\bf M}}
\def\gnpa{G_{n,p}^{\boldsymbol \alpha}}
\def\dnpa{D_{n,p}^{\boldsymbol \alpha}}
\def\gnqa{G_{n,q}^{\boldsymbol \alpha}}
\def\dnpastar{D_{n,p}^{{\boldsymbol \alpha},*}}
\title{Colorful Hamilton cycles in random graphs}
\author[1]{Debsoumya Chakraborti\thanks{Supported by the Institute for Basic Science (IBS-R029-C1)}}
\author[2]{Alan Frieze\thanks{Research supported in part by NSF grant DMS1952285}}
\author[2]{Mihir Hasabnis}
\affil[1]{\small Discrete Mathematics Group, Institute for Basic Science (IBS), Daejeon,~South~Korea}
\affil[2]{\small Department of Mathematical Sciences, Carnegie Mellon University, Pittsburgh,~USA}
\affil[ ]{\small Email:
\texttt{debsoumya@ibs.re.kr},
\texttt{alan@random.math.cmu.edu},
\texttt{mhasabni@andrew.cmu.edu} }
\begin{document}

\maketitle

\begin{abstract}
Given an $n$ vertex graph whose edges are colored from one of $r$ colors $C=\set{c_1,c_2,\ldots,c_r}$, we define the Hamilton cycle color profile $hcp(G)$ to be the set of vectors  $(m_1,m_2,\ldots,m_r)\in [0,n]^r$ such that there exists a Hamilton cycle that is the concatenation of $r$ paths $P_1,P_2,\ldots,P_r$, where $P_i$ contains $m_i$ edges of color $c_i$. We study $hcp(G_{n,p})$ when the edges are randomly colored. We discuss the profile close to the threshold for the existence of a Hamilton cycle and the threshold for when $hcp(G_{n,p})=\set{(m_1,m_2,\ldots,m_r)\in [0,n]^r:m_1+m_2+\cdots+m_r=n}$.
\end{abstract}

\section{Introduction}
We are given an $n$-vertex graph where each edge is colored from a set $C=\set{c_1,c_2,\ldots,c_r}$. The Hamilton cycle color profile $hcp(G)$ is defined to be the set of vectors  $\bm \in \bM=\set{\bm\in[0,n]^r: m_1+\cdots+m_r=n}$ such that there exists a Hamilton cycle $H$ such that $H$ is the concatenation of $r$ paths $P_1,P_2,\ldots,P_r$, where $P_i$ contains $m_i$ edges of color $c_i$. 

Let $\a_1,\a_2,\ldots,\a_r$ be positive constants that sum to one and $\a$ denote $(\a_1,\a_2,\ldots,\a_r)$. Let $\gnpa$ denote the random graph $G_{n,p}$ where each edge $e$ is independently given a random color $c(e)\in C=\set{c_1,c_2,\ldots,c_r}$ where the color $c(e)$ of edge $e$ satisfies $\Pr(c (e)=c_i)=\a_i$. 

Randomly colored random graphs have been studied recently in the context of (i) rainbow matchings and Hamilton cycles, see for example \cite{BF}, \cite{CF}, \cite{FK}, \cite{FL} \cite{JW}; (ii) rainbow connection see for example \cite{DFT15}, \cite{gnprainbow}, \cite{HR}, \cite{M}, \cite{KKS}; (iii) pattern colored Hamilton cycles, see for example \cite{AF}, \cite{EFK}. This paper is closely related to Frieze \cite{FM} and Chakraborti and Hasabanis \cite{CH}, where edge-colored matchings are the topic of interest. This paper can be considered to be a contribution to the same genre. Our first theorem considers $G_{n,p}$ where $p$ is close to the Hamiltonicity threshold. For convenience, we denote the set $\set{1,2,\ldots,k}$ by $[k]$.

\begin{theorem}\label{th1}
Fix $r \ge 2$ and positive real numbers $\b,\a_1,\a_2,\ldots,\a_r$ where $\sum_{i=1}^r \a_i = 1$. If $p \ge \frac{\log n+r\log\log n+\om}{n}$ where $\om = \om(n) \rightarrow \infty$ as $n \rightarrow \infty$, then w.h.p. $hcp(\gnpa)\supseteq \bM_\b=\set{\bm\in \bM:m_i\geq \b n,i\in[r]}$. 
\end{theorem}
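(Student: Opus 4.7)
The plan is to adapt the classical P\'osa rotation–extension method to colored Hamilton cycles, combined with a two-round exposure.

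First, I would decompose $p \ge p_1 + p_2$ with $p_1 = (\log n + \log\log n + \omega_1)/n$ just above the classical Hamiltonicity threshold and $p_2 = ((r-1)\log\log n + \omega_2)/n$, with $\omega_1, \omega_2 \to \infty$. By standard coupling, $\gnpa$ stochastically contains $G^{(1)} \cup G^{(2)}$ for two independent colored random graphs. The first piece is used to build color-$c_j$ paths and the second to supply ``booster'' edges for closing P\'osa rotations.

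Next, I would establish w.h.p.\ the following structural properties: (i) for each color $c_i$, the set $B_i$ of vertices whose color-$c_i$ degree in $G^{(1)}$ is less than, say, $\tfrac{1}{2}\alpha_i \log n$ satisfies $|B_i| = o(n)$; (ii) $\bigcap_{i=1}^r B_i = \emptyset$, and more generally the intersections $\bigcap_{i \in S} B_i$ are small for every $S \subseteq [r]$; (iii) each color subgraph of $G^{(1)}$ satisfies a P\'osa-type expansion property on its core $V \setminus B_i$. Properties (i)--(ii) follow from straightforward Chernoff/Poisson tail computations on color degrees (together with $\bigcap_{i} B_i \subseteq \{v : d_{G^{(1)}}(v) \le r/2\} = \emptyset$ w.h.p.\ at the given threshold), and (iii) is a standard consequence of density in the core.

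For each target vector $\bm \in \bM_\beta$, the construction proceeds as follows. Designate $r$ transition vertices $t_0, \ldots, t_{r-1}$ with high degree in the two consecutive colors meeting at each. Partition the remaining vertices into arcs $W_1, \ldots, W_r$ of sizes $m_j - 1$, assigning each $v$ to some $W_j$ with $v \notin B_j$; feasibility follows from (ii) via a Hall-type argument, since $|W_j| \ge \beta n - 1 \gg |B_j|$ and $\bigcap_i B_i = \emptyset$. Within each arc, apply colored P\'osa rotation-extension in the color-$c_j$ subgraph of $G^{(1)}$ to build a Hamilton path on $\{t_{j-1}\} \cup W_j \cup \{t_j\}$ from $t_{j-1}$ to $t_j$, using boosters from $G^{(2)}$ to close rotations. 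Concatenation yields the desired cycle. Because the properties in the previous paragraph are independent of $\bm$, the construction succeeds simultaneously for all $\bm \in \bM_\beta$ on the same random graph.

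The main obstacle is the arc-level P\'osa step. The color-$c_j$ subgraph restricted to an arc of size $\approx \beta n$ has expected degree only $\alpha_j \beta \log n$ inside the arc, which is below the classical Hamiltonicity threshold whenever $\alpha_j \beta < 1$; the restricted subgraph will typically have arc-isolated vertices, so a direct appeal to Hamiltonicity of $G_{m_j, p\alpha_j}$ fails. The resolution is to drive P\'osa not by the density on the arc but by the color-$c_j$ expansion of $V \setminus B_j$ (which we control globally via (iii)); the assignment ensures $W_j \cap B_j = \emptyset$, giving the minimum-degree condition that rotations require, while the booster edges from $p_2$ provide the closure flexibility at the endpoints. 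The extra $r\log\log n$ in the threshold is precisely the cost of executing this rotation argument once in each of the $r$ color classes.
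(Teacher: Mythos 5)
There is a genuine gap at the heart of your plan: the claim in (ii) that $\bigcap_{i=1}^r B_i=\emptyset$ w.h.p.\ is false, and the Hall-type assignment that relies on it collapses. A vertex in $\bigcap_i B_i$ has total degree less than $\sum_i\tfrac12\a_i\log n=\tfrac12\log n$ (not $r/2$), and at $p\approx\log n/n$ the number of vertices of degree at most $\tfrac12\log n$ is $n^{c+o(1)}$ for a positive constant $c$; worse, when $\om=o(\log\log n)$ there are typically vertices of degree exactly $r+1$, and such a vertex has at most $r+1$ neighbors in \emph{every} color, so it lies in every $B_i$ and cannot be assigned to any arc $W_j$ with $v\notin B_j$. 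These constant-degree vertices are precisely the obstruction, and your outline has no mechanism for them. The paper's resolution is an absorption step you are missing: since the stated threshold forces minimum degree at least $r+1$, the pigeonhole principle gives every problematic vertex $v$ two incident edges $vw_1,vw_2$ of the \emph{same} color $c_j$; one fixes the monochromatic cherry $w_1vw_2$ in advance, reassigns $v,w_1,w_2$ to $V_j$, and runs a \emph{restricted} rotation--extension that never breaks these cherries. This also explains the $r\log\log n$ term correctly: it is there to guarantee minimum degree $r+1$ (so pigeonhole yields a monochromatic cherry at every low-degree vertex), not, as you suggest, to pay for $r$ separate rotation arguments.

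A secondary issue is the uniformity over $\bm$. Your properties (i)--(iii) are stated globally, but the P\'osa expansion you actually need is for neighborhoods \emph{inside each arc} $W_j$, and the arcs depend on $\bm$; since $|\bM_\b|=\Theta(n^r)$, these $\bm$-dependent events must hold with probability $1-o(n^{-r})$ to survive a union bound. The paper tracks this distinction explicitly (its ``w.v.h.p.'' events), and handles the $\bm$-dependence of the low-degree sets by covering all $V_i$'s by a fixed family of intervals (Lemma \ref{basic}). Your two-round exposure and booster mechanism are otherwise in the same spirit as the paper's, but without the cherry-absorption step the argument does not go through at the stated threshold.
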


We will, for convenience, assume that $\om=o(\log\log n)$ and note that this also implies the theorem for larger $\om$. We next discuss why the factor $r$ in the definition of $p$ cannot be replaced by anything smaller in Theorem \ref{th1}. The importance of the factor $r$ lies in the fact that it implies that the minimum degree is at least $r+1$ w.h.p. and if we replace $\om=o(\log\log n)$ by $-\om$ then w.h.p. there will be at least $e^{\om}/2$ vertices of degree $r$. In which case there will w.h.p. be $\a_1 \a_2 \cdots \a_r e^{\om}/4$ vertices of degree $r$, all of whose incident edges have a distinct color. Thus, it is impossible to have a Hamilton cycle made from the concatenation of $r$ monochromatic paths.

Our next theorem considers when to expect $G_{n,p}$ to have a full Hamilton cycle color profile. For brevity, let $\a_{\min}=\min\set{\a_1,\ldots,\a_r}$. 

\begin{theorem}\label{th2}
Suppose that $r,\a_1,\a_2,\ldots,\a_r$ are as in Theorem \ref{th1} and that $p \ge \frac{\log n+\log\log n+\om}{\a_{\min} n}$, where $\om = \om(n) \rightarrow \infty$ as $n \rightarrow \infty$. Then, w.h.p. $hcp(\gnpa)=\bM$.
\end{theorem}

If $p\leq \frac{\log n+\log\log n-\om}{\a_{\min} n}$, then w.h.p. $hcp(\gnpa)\neq \bM$; indeed, the subgraph of $G_{n,p}$ induced by the edges of color~1 has a vertex of degree one, assuming that $\a_{\min}=\a_1$.

We finally consider directed versions of the above two theorems. Let $\dnpa$ denote the random digraph in which each edge of the complete digraph $\vec{K}_{n,p}$ occurs with probability $p$ and is randomly colored as above. We use the coupling argument of McDiarmid \cite{mcd} to prove the following couple of theorems.

\begin{theorem}\label{th3}
Suppose that $r, \b, \a_1,\a_2,\ldots,\a_r$ are as in Theorem \ref{th1} and that $p \ge \frac{\log n+r\log\log n+\om}{n}$, where $\om = \om(n) \rightarrow \infty$ as $n \rightarrow \infty$. Then, w.h.p. $hcp(\dnpa)\supseteq \bM_\b=\set{\bm\in \bM:m_i\geq \b n,i\in[r]}$. 
\end{theorem}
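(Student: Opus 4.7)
\proofstart
The plan is to reduce Theorem~\ref{th3} to Theorem~\ref{th1} using the coupling argument of McDiarmid \cite{mcd}, extended to handle random edge colourings. McDiarmid's coupling provides a joint distribution of $G_{n,p}$ and $D_{n,p}$ under which any undirected Hamilton cycle appearing in $G_{n,p}$ lifts to a directed Hamilton cycle in $D_{n,p}$; in particular, the undirected Hamiltonicity threshold is also a directed Hamiltonicity threshold, matching the form $p \geq (\log n + r\log\log n + \om)/n$ appearing in Theorem~\ref{th1}.

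We construct a colour-aware version of this coupling as follows. For each unordered pair $\{i,j\}$, we draw a single colour $c_{ij}$ from the $\a$-distribution and use it both to colour the (possibly present) edge $\{i,j\}$ in $\gnpa$ and to colour the (possibly present) arc of $\dnpa$ that McDiarmid's coupling selects when $\{i,j\}$ is present in $\gnpa$. The colour of the remaining (unselected) arc, if present, is drawn afresh from $\a$, independently of everything else. A direct marginal check shows that the resulting joint distribution has exactly the laws $\gnpa$ and $\dnpa$ as prescribed in the statement, since each arc of $\dnpa$ is present with probability $p$, is marginally $\a$-coloured, and the two arcs in a pair have independent colours (the fresh draw decouples them).

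Given this extended coupling, Theorem~\ref{th1} applied to the $\gnpa$-marginal delivers, w.h.p., a coloured Hamilton cycle of any prescribed profile $\bm \in \bM_\b$ in $\gnpa$. McDiarmid's coupling then lifts this HC to a directed HC in $\dnpa$ whose arcs are precisely those selected by the coupling from the edges of the undirected HC, and hence inherit the colours $c_{ij}$ of those edges; the resulting directed cycle in $\dnpa$ has exactly the profile $\bm$. Since this is done uniformly for all $\bm \in \bM_\b$, we obtain $hcp(\dnpa) \supseteq \bM_\b$ w.h.p.

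The main obstacle is to verify that McDiarmid's coupling can be set up so that (i) the arcs underlying the lifted HC are exactly those whose colours we have identified with the HC's undirected edge colours via the shared draws $c_{ij}$, and (ii) the fresh colour-draws for the unselected arcs leave the joint arc-colour independence structure of $\dnpa$ intact. Item (ii) is a one-line marginal computation; item (i) is the coloured counterpart of McDiarmid's core combinatorial claim that an undirected Hamilton cycle admits an orientation witnessed by the arcs the coupling provides, and it follows from his construction without change once the colours are carried along passively.
\proofend
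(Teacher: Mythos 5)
There is a genuine gap at the heart of your argument. You assert that McDiarmid's coupling ``provides a joint distribution of $G_{n,p}$ and $D_{n,p}$ under which any undirected Hamilton cycle appearing in $G_{n,p}$ lifts to a directed Hamilton cycle in $D_{n,p}$,'' and your whole proof rests on carrying colours along this lift. No such coupling exists. Under any natural coupling of edge- and arc-indicators, the arcs associated with the edges of an undirected Hamilton cycle need not be consistently orientable into a directed cycle, and McDiarmid's theorem does not claim otherwise: it is a comparison of \emph{probabilities}, proved by passing through a sequence of $N=\binom{n}{2}$ hybrid models $\G_0,\ldots,\G_N$ and showing $\Pr(\G_{i+1}\in\cF)\le\Pr(\G_i\in\cF)$ by conditioning on all pairs other than $e_{i+1}$ and doing a local case analysis (the cases (a), (b), (c) in Section \ref{pth34}). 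Your item (i) in the final paragraph --- that ``an undirected Hamilton cycle admits an orientation witnessed by the arcs the coupling provides'' --- is precisely the step that fails, so the reduction to Theorem \ref{th1} does not go through as stated.

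Two further points where your set-up diverges from what is actually needed. First, the undirected marginal of the relevant coupling is not $G_{n,p}$ but $G_{n,q}$ with $q=p(1-p)$ (an edge appears when exactly one of the two tentative arcs is chosen, and that arc is the one retained in the digraph); one must check that $q$ still satisfies the hypothesis of Theorem \ref{th1}, which it does because $np^2=o(1)$ is absorbed by $\om$. Second, to run the interchange argument in the coloured setting one cannot leave the colours random: the paper first shows that all but an $\e_n^{1/2}$-fraction of colourings $c$ of $K_n$ are ``good'' (the set $\Omega(\a)$), fixes such a $c$, and gives the retained arc of pair $e_j$ the colour $c(e_j)$. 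With the colouring fixed, the existence of a correctly coloured Hamilton cycle is a deterministic monotone function of the arc indicators, which is what makes the case analysis in (a)--(c) valid. Your scheme of drawing $c_{ij}$ for the selected arc and a fresh colour for the other arc keeps the colours random inside the interchange step, where the asymmetry between the two arcs' colours would break the computation in case (b). You should restructure the proof along these lines: condition on a good colouring, replace $p$ by $q=p(1-p)$, and prove the monotonicity inequality \eqref{MCD} hybrid by hybrid rather than invoking a pointwise lifting.
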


\begin{theorem}\label{th4}
Suppose that $r,\a_1,\a_2,\ldots,\a_r$ are as in Theorem \ref{th1} and that $p \ge \frac{\log n+\log\log n+\om}{\a_{\min} n}$, where $\om = \om(n) \rightarrow \infty$ as $n \rightarrow \infty$. Then, w.h.p. $hcp(\dnpa)=\bM$.
\end{theorem}

Note that Theorems \ref{th3} and \ref{th4} probably carry an extra $\frac{\log\log n}{n}$ in the values of $p$. This is inherent in the use of McDiarmid's argument.

\section{Preliminaries}
Throughout the paper, for clarity of presentation, we systematically omit the floor and ceiling signs when they are not crucial. This paper is organized in the following way. We start with a few standard properties of random graphs in the current section, which will be useful to prove our main results. We prove Theorems \ref{th1} and \ref{th2} in the next two sections, and prove Theorems \ref{th3} and \ref{th4} in Section \ref{pth34}. We defer the proofs of some structural lemmas for random graphs to Section \ref{fin}.

For convenience, we fix the number of colors, denoted by $r$, throughout the paper. Everywhere we will assume $n$ to be sufficiently large to support our arguments. In the following, we distinguish between events of two kinds. Those that do not depend on \bm\ and we show that they occur with probability $1-o(1)$, i.e., w.h.p. Those events that do depend on \bm\ where we need to prove that they occur with probability $1-o(n^{-r})$ in order to use the union bound on the `bad' events over all choices of $\bm\in\bM$ (note that $|\bM| = \Theta(n^r)$). We say that such events occur {\em w.v.h.p.}

The following lemma will be used in the proof of both Theorems \ref{th1} and \ref{th2}.

\begin{lemma}\label{lem0}
Suppose that $p=\frac{(c+o(1))\log n}{n}$ where $c\geq 1$ is constant. Then the following properties hold in $G_{n,p}$:
\begin{enumerate}[{\bf B1}]
\item Suppose that $S\subseteq [n]$ and $|S|=\Omega(n)$. For a vertex $v\in [n]$, we let $d_S(v)$ denote the number of neighbors of $v$ in $S$. Then, $|B(p,S)|\leq n^{1-c|S|/4n}$  w.v.h.p., where $B(p,S)=\set{v\in [n]:d_S(v)\leq \frac{c|S|\log n}{20n}}$.\label{B1}
\item Let $\SMA=B(p,[n])$. Then w.h.p., $v,w\in \SMA$ implies that $\dist(v,w)\geq 3$ in $G_{n,p}$. (Here, \dist\ refers to graph distance.)
\item Fix $S$ as in B\ref{B1}. Then w.v.h.p., every $v\in [n]$ is within distance 10 of at most $\frac{10rn}{c|S|}$ vertices in $B(p,S)$.
\item If $p = \frac{\log n+r\log\log n+\om}{n}$ with $\om = \om(n) \rightarrow \infty$ as $n \rightarrow \infty$, then $G_{n,p}$ has minimum degree at least $r+1$ w.h.p. 
\item W.v.h.p., there exists an edge between $S_1$ and $S_2$ for every $S_1,S_2\subseteq [n]$ such that $|S_1|,|S_2|\geq \frac{n(\log\log n)^2}{\log n}$ and $S_1\cap S_2=\emptyset$.
\item The maximum degree in $G_{n,p}$ is at most $5c\log n$ w.h.p.
\item $G_{n,p}$ does not contain a copy of $K_{2,3}$.
\end{enumerate}
\end{lemma}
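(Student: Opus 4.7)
The plan is to establish B\ref{B1}--B5 via Chernoff-style concentration estimates combined with union bounds, all specialized to $p=(c+o(1))\log n/n$. For B\ref{B1} I note that $d_S(v)$ is essentially Binomial with mean $\mu=(c+o(1))|S|\log n/n$, and the lower-tail bound $\Pr(X\le a)\le e^{-\mu}(e\mu/a)^a$ at $a=\mu/20$ gives $\Pr(v\in B(p,S))\le n^{-(4/5-o(1))c|S|/n}$. To promote this into a w.v.h.p.\ bound on $|B(p,S)|$, fix any candidate $k$-set $W\subseteq[n]$ with $k=\lceil n^{1-c|S|/(4n)}\rceil$ and estimate $\Pr(W\subseteq B(p,S))$ using only the edges between $W$ and $S\setminus W$, which are independent across $W$. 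This gives at most $n^{-(4/5-o(1))ck|S|/n}$, and union-bounding $\binom{n}{k}\le(en/k)^k$ choices of $W$ yields a log-probability of at most $-(4/5-1/4-o(1))ck|S|\log n/n=-n^{\Omega(1)}$, well below $-r\log n$.

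For B2 the expected number of ordered pairs $(u,v)$ with $u,v\in\SMA$ and $\dist(u,v)\le 2$ is $O(n^{1-8c/5}\,\mathrm{polylog}\,n)=o(1)$: the per-pair probability $\Pr(u,v\in\SMA)$ is at most $n^{-8c/5+o(1)}$ (applying the per-vertex bound of B\ref{B1} to disjoint edge sets out of $u$ and $v$), while an additional factor $p$ or $np^2$ pays for $u,v$ being adjacent or sharing a neighbour. For B4, a first-moment calculation gives $\E[\#\{v:\deg(v)\le r\}]=(1+o(1))e^{-\omega}/r!=o(1)$ when $p\ge(\log n+r\log\log n+\omega)/n$, so w.h.p.\ every vertex has degree at least $r+1$. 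For B5, the probability that a fixed disjoint pair $S_1,S_2$ with $|S_i|\ge s:=n(\log\log n)^2/\log n$ has no crossing edge is at most $\exp(-ps^2)=\exp(-\Theta(n(\log\log n)^4/\log n))$, while $\binom{n}{s}^2\le\exp(O(s\log(n/s)))=\exp(O(n(\log\log n)^3/\log n))$, and the extra $\log\log n$ factor in the exponent secures the w.v.h.p.\ union bound.

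The step I expect to be the main obstacle is B3. Here I would bound the expected number of tuples $(v;w_1,\ldots,w_k)$ of distinct vertices with $k=\lceil 10rn/(c|S|)\rceil+1$ such that each $w_i$ is joined to $v$ by a path of length at most $10$ in $G_{n,p}$ and each $w_i$ lies in $B(p,S)$. The short paths admit a tree skeleton on $\{v,w_1,\ldots,w_k\}$ together with at most $9k$ auxiliary vertices, contributing $n^{O(k)}$ vertex choices and $p^{O(k)}$ in edge probabilities; edge-disjoint from this skeleton, the low-degree constraints on the $w_i$ contribute an independent factor $n^{-(4/5-o(1))ck|S|/n}$ via the Chernoff estimate underlying B\ref{B1}. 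Once $ck|S|/n$ exceeds $10r$, the polynomial saving from the low-degree factors overwhelms the $(np)^{O(k)}=(\log n)^{O(k)}$ accrued by the skeleton, driving the expected count below $n^{-r}$. The delicate book-keeping is to separate the edges certifying the short paths from those certifying $w_i\in B(p,S)$; I would arrange this by first exposing the tree skeleton and then, conditional on it, exposing only the edges from each $w_i$ into $S\setminus V(\mathrm{skeleton})$.
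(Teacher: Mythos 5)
Your proposal is correct and follows essentially the same route as the paper: union bounds over subsets for B\ref{B1} (the paper uses $\E\bigl[\binom{Z}{L}\bigr]$ with $L=\log n$ rather than sets at the threshold size, but the mechanism --- independence of the edge sets from distinct subset members into $S$ minus the subset --- is identical), a first-moment count over pairs at distance at most two for B2, a tree-skeleton first moment with the low-degree certificates exposed on edges into $S\setminus V(\text{skeleton})$ for B3 (exactly the paper's connected-set/spanning-tree argument), and the standard computations for B4 and B5. No gaps.
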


This lemma is proved in Section \ref{plem0}.

\section{Proof of Theorem \ref{th1}}\label{pth1}
Fix a vector $\bm\in \bM_\b$ and let $\m_i=m_i/n$ for $i\in [r]$ and let $\m_{\min}=\min\set{\m_i}$. Partition the vertex set $[n]$ into $V_1,V_2,\dots, V_r$, where $V_1$ contains the first $m_1$ elements (i.e., $V_1 = [m_1]$), $V_2$ contains the next $m_2$ elements, and so on. 

We let $p_1=\frac{\log n+r\log\log n+\om/2}{n}$ and then let $p_2,p_3$ satisfy $1-p=(1-p_1)(1-p_2)(1-p_3)$ so that $p_2=p_3\approx \om/4n$. Let $d(v)$ denote the degree of $v$ in $G_{n,p_1}$ and let $d_i(v)=|\set{u \in V_i : uv \text{ has color $i$ in $G_{n,p_1}$}}|$, for $i\in [r]$. Define the following sets:
\begin{align}
A_{\bm}&=\set{v:\exists i\in[r], d_i(v)\leq \frac{\m_i\a_i \log n}{25}}.\label{defAm}\\
B&=\set{v:d(v)\leq \frac{50r^2}{\b\a_{\min}}}\label{defB}.
\end{align}
Note that $B$ is a subset of $A_\bm$.
\begin{lemma}\label{nlem}\ \\
(a) For every $\bm \in \bM_\b$, w.v.h.p, $|A_{\bm}|\leq rn^{1-\a_{\min}\m_{\min}/4}$. Thus, w.h.p. simultaneously, for all $\bm \in \bM_\b$,
\beq{smallAm}{
|A_{\bm}|\leq rn^{1-\a_{\min}\m_{\min}/4}.
}
(b) For every $\bm \in \bM_\b$, w.v.h.p. every $v \in [n]$ is within distance 10 of at most $\frac{10r^2}{\a_{\min}\m_{\min}}$ vertices of $A_\bm$. Thus, w.h.p. simultaneously, for all $\bm \in \bM_\b$, every $v \in [n]$ is within distance 10 of at most $\frac{10r^2}{\a_{\min}\m_{\min}}$ vertices of $A_\bm$.

(c) The following is w.h.p. true simultaneously for all choices of $\bm \in \bM_\b$: every pair of vertices $u\in A_\bm$ and $w\in B$ are at distance at least three in $G_{n,p_1}$.
\end{lemma}

Parts (a) and (b) of this lemma are straightforward corollaries of Properties {\bf B1} and {\bf B3} respectively. Proving Part (c) is more subtle and is done in Section \ref{yy}.

In some sense, the vertices $v$ in the set $A_\bm$ are dangerous (and we need to be careful how we place them in the Hamilton cycle). We do this by first finding vertex-disjoint paths of length two with the vertices in $A_\bm$ as the middle vertex, and then later, we make sure to include those paths in the Hamilton cycle. 

We now give an outline of the way we will construct a Hamilton cycle in several steps. Later we will elaborate on why these steps are valid, assuming the high probability events stated in Lemmas \ref{lem0} and \ref{nlem}.

\begin{enumerate}[{\bf Step 1}]
\item We first argue that for each $v\in A_\bm$, we can choose a path $Q_v=(w_1,v,w_2)$ where $w_1,w_2\notin A_\bm$ and both edges of $Q_v$ have the same color, $c_j$, say. Let $\cQ=\set{Q_v : v \in A_\bm}$ and let $\cQ_i\subseteq \cQ$ be the set of paths contained in $V_i, i\in [r]$. The paths $Q_v,v\in A_\bm$ can be chosen to be vertex disjoint. Next, we move $v,w_1,w_2$ to $V_j$ and move three vertices in $V_j\setminus (A_\bm\cup N(v))$ to the sets originally containing $v,w_1,w_2$, in order to keep the sizes of the $V_i$'s unchanged. 


Following this step, for each $i\in [r]$, let $G_i$ denote the subgraph of $G_{n,p_1}$ with vertex set $V_i$ and edges of color $i$. 

\item For each $i\in [r]$, execute a restricted {\em rotation-extension} algorithm where at all times, we ensure that for all $Q\in\cQ_i$, the current path either contains $Q$ or is vertex disjoint from $Q$. In this way, create a Hamilton path $H_i$ through $V_i$ for $i\in [r]$. 

\item Connect the Hamilton paths constructed in Step 2 into a Hamilton cycle.
\end{enumerate}

\subsection{Validation of Step 1}\label{validation:step1}
Property {\bf B4} and the pigeonhole principle imply that for each $v\in A_\bm$, we can choose two neighbors $w_1,w_2$ such that the edges $vw_1,vw_2$ have the same color. If $v\in B$, then arbitrarily pick two neighbors $w_1,w_2$ such that the edges $vw_1,vw_2$ have the same color; Lemma \ref{nlem}(c) implies that $w_1,w_2 \notin A_\bm$. If $v_1,v_2\in B$ then Property {\bf B2} ensures that the corresponding paths $Q_{v_1},Q_{v_2}$ are vertex disjoint. 

If $v\in A_\bm\setminus B$, then $d(v)>\frac{50r^2}{\b\a_{\min}}$ and $v$ has at most $\frac{10r^2}{\a_{\min}\m_{\min}}$ neighbors in $A_\bm$, from Lemma \ref{nlem}(b). Moreover, by Lemma \ref{nlem}(b) and Property {\bf B7}, the vertex $v$ has at most $\frac{20r^2}{\a_{\min}\m_{\min}}$ neighbors $w$ such that $w$ has at least one neighbor in $A_\bm\setminus \set{v}$. Thus, for each $v\in A_\bm\setminus B$, we have at least $\frac{50r^2}{\b\a_{\min}}-\frac{30r^2}{\a_{\min}\m_{\min}} \ge \frac{20r^2}{\b \a_{\min}}$ choices of neighbors which are neither in $A_\bm$ nor in the neighorhood of some vertex in $A_\bm\setminus \set{v}$. As a consequence, in a greedy manner, we can construct a path $Q_v=(w_1,v,w_2)$ for each $v\in A_\bm\setminus B$ such that $w_1,w_2 \notin A_\bm$, the edges $vw_1,vw_2$ have the same color, and each $Q_v$ is vertex disjoint. Note also that if $v_1\in A_\bm$ and $v_2\in B$ then we can use Lemma \ref{nlem}(c) to argue that $Q_{v_1},Q_{v_2}$ are vertex disjoint.


\subsection{Validation of Step 2} \label{validation_of_step2}
Call a neighbor $w$ of a vertex $v$ {\em bad} if $(\{w\}\cup N(w))\cap A_\bm\neq \emptyset$. In Step 1, only  the bad neighbors of $v \notin A_\bm$ can reduce the $V_i$-neighborhood of $v$. Lemma \ref{nlem}(b) implies that for each $v \notin A_\bm$, the number of neighbors of $v$ in $G_i$ can drop by at most $\frac{30r^2}{\a_{\min}\m_{\min}}$. Thus, the vertices of $G_i$, not in $A_\bm$, have degree at least $\frac{\m_{\min}\a_{\min}\log n}{25}-\frac{30r^2}{\a_{\min}\m_{\min}}\geq \frac{\m_{\min}\a_{\min} \log n}{26}$.

\subsubsection{Expansion properties}\label{exprop}
We need to show that each $G_i$ has certain expansion properties. We have the following properties of $G_i \subseteq G_{n,p_1}$, which will be verified in Section \ref{fin}. For a set $S\subseteq V_i$, let $N_i(S)=\set{w\in V_i \setminus S:\exists v\in S\ s.t.\ vw\in E(G_i)}$.

\begin{lemma} \label{lem2}
The following properties hold for all $i\in [r]$ w.v.h.p. 
\begin{enumerate}[(a)]
\item For every set $S\subseteq V_i\setminus A_\bm$ with $|S|\leq n/\log^4n$, we have that $|N_i(S)|\geq |S|\m_{\min} \a_{\min} \log n/1000$.
\item For every set $S\subseteq V_i\setminus A_\bm$ with $|S|\leq \m_{\min}^2 \a_{\min}^2 n/10^7$, we have that $|N_i(S)|\geq 3|S|$.
\item The graph induced by color $i$ on the vertex set $V_i \setminus A_\bm$ is connected.
\end{enumerate}
\end{lemma}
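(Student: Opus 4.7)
\textbf{Plan for Lemma \ref{lem2}.}  The plan is to regard each $G_i$ as the random graph on $V_i$ in which each edge is present independently with probability $\alpha_ip_1$, and to combine this with the degree lower bound $d_i(v)\ge d_{\min}:=\mu_{\min}\alpha_{\min}\log n/25$ that holds for every $v\in V_i\setminus A_{\bm}$ by the definition \eqref{defAm}.  All three parts fit into this framework, and we need estimates with failure probability $o(n^{-r-1})$ so that a subsequent union bound over the $\Theta(n^r)$ choices of $\bm$ still yields w.v.h.p.

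For part (a), put $t:=c_1 s\log n$ with $c_1:=\mu_{\min}\alpha_{\min}/100$, and observe that if some $S\subseteq V_i\setminus A_{\bm}$ of size $s$ has $|N_i(S)|<t$, then the vertex set $U:=S\cup N_i(S)$ of size $\le s+t$ contains at least $sd_{\min}/2$ edges of $G_i$: each vertex of $S$ has $\ge d_{\min}$ edge-endpoints in $U$, and each edge is counted at most twice.  Bounding the probability that such a dense $U$ exists by $\sum_{u\le s+t}\binom{m_i}{u}\binom{\binom{u}{2}}{sd_{\min}/2}(\alpha_ip_1)^{sd_{\min}/2}$ handles the small-$s$ regime.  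For larger $s$, where this edge-count bound weakens, it is more efficient to bound the tail of $|N_i(S)|$ directly: $|N_i(S)|$ is a sum of $|V_i|-s$ independent $\mathrm{Bern}(1-(1-\alpha_ip_1)^s)$ variables with mean $\Theta(s\log n)$, and since the target $c_1s\log n$ is a factor $\Theta(1)$ below this mean, Chernoff gives failure probability of order $\exp(-\Omega(s\log n))$.  Summing over the two subregimes in $s$, and then over $i$, yields (a).

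Part (b) uses the same edge-count inequality with $t=3s$, so $|U|\le 4s$ while still containing $\Omega(s\log n)$ edges of $G_i$; a straightforward calculation shows the resulting bound is $o(n^{-r-1})$ uniformly for $s\le \mu_{\min}^2\alpha_{\min}^2 n/10^5$.  For part (c), suppose for contradiction that $G_i$ has a component $C$ with $|C|\le m_i/2$.  Since $|A_{\bm}|=O(n^{1-\alpha_{\min}\mu_{\min}})=o(n)$ by \eqref{smallAm}, the set $S:=C\setminus A_{\bm}$ satisfies $|S|\ge |C|-|A_{\bm}|$ and $N_i(S)\subseteq C$, forcing $|N_i(S)|\le |A_{\bm}|$; if $|C|$ falls in the range handled by (a) or (b) this contradicts the corresponding expansion lower bound, while if $|C|\ge n(\log\log n)^2/\log n$ then property \textbf{B5} directly supplies an edge between $C$ and $V_i\setminus C$.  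The main technical difficulty lies in part (a): the target expansion factor $c_1\log n$ matches the natural expansion rate of $G(m_i,\alpha_ip_1)$ up to a constant, so no single uniform bound is sharp enough across all $s$, and one has to split the range of $s$ into subregimes and use tailored arguments in each.
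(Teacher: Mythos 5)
Your plan is correct in outline and rests on the same two pillars as the paper's proof: the degree lower bound for vertices outside $A_\bm$ coming from \eqref{defAm}, and a first-moment bound showing that small vertex sets cannot span too many colour-$i$ edges, with connectivity then following from expansion together with Property \textbf{B5}. The execution differs in instructive ways. For (a), the paper proves a single density lemma (Lemma \ref{density}: w.v.h.p.\ every set of size at most $\rho n/\log n$ spans at most $e\rho c|S|\log n$ edges) and applies it to $U=S\cup N_i(S)$, whereas you split into two regimes. Your instinct that the edge-count bound degrades for large $s$ is sound: once $|S|\gg n/\log^2 n$ the set $U$ has size $\Theta(n)$ and its expected internal edge count already exceeds the roughly $1.9|U|$ edges one is trying to exclude, so the first moment is vacuous there (and the paper's own application of Lemma \ref{density} to $U$, whose size can reach $\Theta(n)$, strains that lemma's hypothesis $|U|\le\rho n/\log n$). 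The one point you must verify in the Chernoff regime is that the union bound closes: the per-set failure probability is only $\exp(-c_0\mu_{\min}\alpha_{\min}s\log n)$ for a small absolute constant $c_0$, while the number of $s$-sets is $\exp((1+o(1))s\log(n/s))$, so this regime only covers $s\ge n^{1-c_0\mu_{\min}\alpha_{\min}}$; fortunately that range overlaps the range $s\le n/\log^3 n$ where the edge-count argument does work, so the crossover can be placed in the overlap --- but this needs to be checked explicitly. For (b) the paper avoids a second computation entirely by deducing the claim from (a): take $X\subseteq S$ of size exactly $\alpha_{\min}\mu_{\min}n/200\log n$ and use $|N_i(S)|\ge|N_i(X)|-|S|$; your direct count with $|U|\le 4s$ also works, since the required $sd_{\min}/2$ edges exceed the expected number inside a set of size $4s$ by a large constant factor uniformly over the stated range of $s$. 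For (c) the two arguments coincide; note only that a small component $C$ yields $N_i(C\setminus A_\bm)\subseteq C\cap A_\bm$ and hence $|C|=O(|A_\bm|)$ rather than an immediate contradiction with (a) or (b), so ruling such components out completely also uses that each $A_\bm$-vertex placed in $V_i$ is joined by its path $Q_v$ to vertices outside $A_\bm$ (a point the paper likewise leaves implicit).
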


This lemma is proved in Section \ref{xx}.

\subsubsection{Step 2: Constructing Hamilton paths in $G_i$}\label{step4}
We now validate Step 2 in a stronger sense. More precisely, we prove that there are many Hamilton paths in each $G_i$. This will later be used in gluing them together to obtain a Hamilton cycle of $G$. Let 
\[
n_0=\frac{\m_{\min}^2 \a_{\min}^2 n}{10^7}.
\]

\begin{lemma} \label{lem3}
W.h.p. simultaneously, for all $\bm\in\bM_\b$, the following two events occur in $G_{n,p_1}\cup G_{n,p_2}$: Each $G_i$ has at least $n_0$ vertices $v$ for which there are at least $n_0$ Hamilton paths with one end point $v$ such that the other end points are pairwise distinct.
\end{lemma}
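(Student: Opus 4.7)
The plan is to apply P\'osa's rotation-extension technique to each $G_i$, driven by the expansion properties established in Lemma~\ref{lem2}. Fix $\bm \in \bM_\b$ and $1 \leq i \leq r$ and aim to prove the conclusion with probability $1 - o(n^{-r})$; a union bound over $\bm \in \bM_\b$ then yields the w.h.p.\ statement.

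I would first show that $G_i$ has a Hamilton path. Take a longest path $P = v_0 v_1 \cdots v_\ell$ in $G_i$ with $v_\ell \notin A_\bm$, which is possible since $|A_\bm| = o(n)$ by Lemma~\ref{nlem}(a). Fix the endpoint $v_0$, and let $R$ denote the set of endpoints obtained from $P$ by sequences of rotations at the moving end. P\'osa's lemma gives $|N_i(R) \setminus R| \leq 2|R|$, since any neighbor of an endpoint $w \in R$ that lies outside $R$ must be on the current path (by maximality of $P$) and is charged via a rotation to a vertex of $R$. Applying Lemma~\ref{lem2}(b) to $R \setminus A_\bm$ and combining with the P\'osa bound gives $3|R \setminus A_\bm| \leq 2|R| + |R \cap A_\bm|$ whenever $|R| \leq n_0$, which rearranges to $|R| \leq 4|A_\bm|$. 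This residual possibility is eliminated via Lemma~\ref{lem2}(a): starting from $v_\ell \notin A_\bm$, a single round of rotations already produces $\deg_{G_i}(v_\ell) - 1 = \Omega(\log n)$ new endpoints, and the super-linear small-set expansion quickly pushes $|R \setminus A_\bm|$ beyond any $O(|A_\bm|)$ bound, again contradicting the P\'osa bound. Hence $|R| > n_0$. If $P$ were not Hamiltonian, connectivity of $G_i$ (Lemma~\ref{lem2}(c)) combined with $|R| > n_0$ would produce an extension of $P$, contradicting its maximality; so $P$ is a Hamilton path.

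Since rotations preserve the vertex set of a path, every rotation of a Hamilton path is itself a Hamilton path. Fixing any Hamilton path with endpoints $v_0, v_\ell$ and repeating the rotation argument above with $v_0$ fixed yields a set $R(v_0)$ of $> n_0$ Hamilton-path endpoints paired with $v_0$. Each $w \in R(v_0)$ is now the endpoint of some Hamilton path, so applying the same rotation argument with $w$ in place of $v_0$ gives $|R(w)| > n_0$. Thus $w$ is itself a qualifying starting vertex, and there are more than $n_0$ such vertices. The principal obstacle is handling the bad set $A_\bm$ inside the P\'osa argument, since Lemma~\ref{lem2} provides expansion only for subsets of $V_i \setminus A_\bm$: the two-part case analysis using Lemma~\ref{lem2}(a) and~(b) outlined above is the technical heart of the argument, as it rules out the degenerate scenario in which the rotation closure $R$ lies almost entirely inside $A_\bm$.
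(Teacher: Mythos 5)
There is a genuine gap at the decisive step. After establishing that the rotation endpoint set $R$ of a longest path $P$ has size greater than $n_0$, you assert that ``connectivity of $G_i$ combined with $|R|>n_0$ would produce an extension of $P$.'' This does not follow. The P\'osa argument guarantees that for every $w\in R$ all neighbours of $w$ lie on $P$ (otherwise $P$ would already be extendable), so a large $R$ only tells you that there are many \emph{pairs} $(v_0,w)$ (or, after rotating from both ends, many pairs $(u,w)$ with $u\in END$, $w\in END_u$) whose addition \emph{as an edge} would create a cycle on $V(P)$, which connectivity could then convert into a longer path. Nothing in Lemma~\ref{lem2} forces even one of these pairs to actually be an edge of $G_i$: expansion of the form $|N(S)|\geq 3|S|$ for small $S$ plus connectivity is not a deterministic sufficient condition for a Hamilton path, and at this edge density one cannot appeal to an edge between every pair of linear-sized sets in the relevant way (one of the two sets in each required pair is a single vertex). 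Moreover, one cannot simply ``look for'' such an edge in $G_i$ at this stage, because the path $P$ and the set $R$ were constructed by examining the edges of $G_i$, so no fresh randomness remains. The paper closes exactly this gap with a sprinkling argument: it writes $1-p=(1-p_1)(1-p_2)$, builds the expansion and the large endpoint sets in $G_{n,p_1}$, and then uses the reserved edges of $G_{n,p_2}$ as \emph{boosters} --- each reserved edge independently has probability at least $\gamma=\a_{\min}n_0^2/3n^2$ of landing on one of the $\geq n_0^2$ useful pairs, so after $O(n)$ successes one reaches a Hamilton path with failure probability $e^{-\Omega(n)}$. Your proof never invokes the two-round exposure, so the conclusion ``so $P$ is a Hamilton path'' is unsupported.

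A secondary, more minor omission: the rotations in this setting are restricted so that each path $Q\in\cQ_i$ (the length-two paths protecting the vertices of $A_\bm$) is either wholly contained in or disjoint from the current path, and the P\'osa bound must be corrected accordingly, as in the paper's inequality $|N(END)|\leq 2|END|+\min\{2|\cQ_i|,\tfrac{20r}{\m_{\min}\a_{\min}}|END|\}$. Your case analysis around $R\cap A_\bm$ gestures at this difficulty but does not enforce the constraint that the $Q_v$ must survive every rotation, which is how the paper guarantees that $A_\bm$ vertices are correctly placed in the final Hamilton path.
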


\begin{proof}
Although by now extension-rotation is a standard procedure for attacking Hamilton cycle problems, we briefly describe it here. Given a path $P=(x_1,x_2,\ldots,x_k)$ an {\em extension} is simply the creation of a new path $P+(x_k,y)$ where $x_ky$ is an edge and $y\notin V(P)$. If $1<i\leq k-2$ and $x_kx_i$ is an edge then we create a new path $(x_1,x_2,\ldots,x_i,x_k,x_{k-1}\ldots,x_{i+1})$ of the same length as $P$ by a {\em rotation} with {\em fixed endpoint} $x_1$. We let $END=END(P,x_1)$ denote the set of vertices that can be the endpoint of a path created by a sequence of rotations.

We modify the above constructions on $G_i$ by adding the restriction that for each $Q \in \cQ_i$, the paths generated either contain $Q$ or are vertex disjoint from $Q$. We can do this by always adding or deleting both edges of such a path in any change. Any rotation that would result in deleting one edge of such a path is neglected. Under the assumption that $P$ is a longest path so that there are no extensions, P\'osa \cite{Posa} proved that $|N(END)|<2|END|$ and then accounting crudely for the interiors of the paths of $\cQ$ we see that the endpoint sets satisfy
\beq{Nend}{
|N(END)|\leq 2|END| + \min\set{2|\cQ_i|, \frac{10r^2}{\m_{\min} \a_{\min}}|END|}.
}
Indeed, suppose that $v\in END$ and $w\in N(v) \subseteq N(END)$ and that $x,y$ are the neighbors of $w$ in $P$. Consider the path $P'$ with endpoint $v$ obtained by rotations. If either of the edges $wx$ or $wy$ are deleted in this sequence, then at least one of $x,y$ is in $END$, accounting for the $2|END|$ term as usual. So, if neither $x$ nor $y$ are in $END$, then $x,w,y$ is a subpath of $P'$, and we cannot rotate using $vw$ because it would destroy some $Q\in\cQ$. This can happen at most $|\cQ_i|$ times accounting for the $2|\cQ_i|$ in \eqref{Nend}. The bound $\frac{10r^2}{\m_{\min} \a_{\min}}|END|$ arises from Lemma \ref{nlem}(b), because at least one of $x,w,y$ must be in $A_\bm$ for the blocking of a rotation.

Since $|\cQ_i| \le |A_\bm| \ll n/\log^4n$ for each $\bm \in \bM_\b$ (by \eqref{smallAm}), we can deduce from Lemma \ref{lem2} that w.h.p. for each $\bm \in \bM_\b$, the endpoint sets are of size at least $n_0$. We show next that with the use of $G_{n,p_2}$, we can prove that each $G_i$ has a Hamilton path w.h.p. More precisely, suppose that $E(G_{n,p_2})=F=\set{f_1,f_2,\ldots,f_\s}$ where w.h.p. $\s\geq \om n/5$. Partition $F$ into $r$ sets $F_1,F_2,\ldots,F_r$ of almost equal size.

Condition on the high probability events in Lemmas \ref{lem0}, \ref{nlem}, and \ref{lem2}. Now given a path $P$ of length $\ell<m_i-1$ in $G_i$, we make a series of rotations with one endpoint fixed until either the endpoint set $END$ reaches $n_0$ in size, or we generate a path that can be extended. Assume the former. Then for each $v\in END$, there is a path $P_v$ of length $\ell$ and one endpoint being $v$. We then try to find a longer path by doing rotations and extensions with $v$ as the fixed endpoint. We do this for all $v\in END$. If we never extend a path, then we terminate with a set $END$ of $n_0$ vertices, and for each $v\in END$, a set of $n_0$ paths with distinct endpoints $END_v$. Observe next that adding an edge $f=vw$ where $w\in END_v$ will enable us to create a path of length $\ell+1$. This is because adding $f$ creates a cycle $C$ of length $\ell+1$. Now $G_i$ is connected. This follows from Lemma \ref{lem2}(c) and the fact that each vertex $v\in A_\bm\cap V_i$ is contained in a path $x,v,y$ where $x,y$ are in $V_i\setminus A_\bm$. We can find a path of length $\ell+1$ by adding an edge $g_1=ww_1$ and deleting an edge $g_2=ww_2$ where $g_2\in E(C)$ and $w_1\notin V(C)$. The edge $f$ is referred to as a {\em booster}.

If we go through the edges of $F_i$ one by one, we see that each edge has probability at least $\g=\a_{\min}n_0^2/3n^2$ of being a booster. This bound holds given the previous edges examined. Thus the probability we fail to obtain a Hamilton path in each $G_i$ is bounded by the probability that the binomial random variable $B(\s/r,\g)<n$, which is bounded by $e^{-\Omega(n)}$. After a simple application of union bound, this shows that w.h.p. for each $\bm \in \bM_\b$, we can find Hamilton paths in each $G_i$ as desired. 
\end{proof}
\subsection{Step 3: Connecting the Hamilton paths together}\label{step5}
In the final step, our goal is to show that w.h.p. we can choose Hamilton paths $P_i$ of $G_i$ with endpoints $x_i$ and $y_i$ for $i=1,2,\ldots,r$, such that for each $i$, the edge $y_ix_{i+1}$ exists and is colored with $c_{i+1}$. We begin by choosing $n_0$ hamilton paths in $G_1$ all with vertex $x_1'$, say as one endpoint.

Assume inductively, that we have chosen $P_1,P_2,\ldots,P_{i-1}$ plus $n_0$ Hamilton paths $Q_1,Q_2,\ldots,Q_{n_0}$ of $G_i$, all with endpoint $x_i$ (or $x_1'$ if $i=1$). Now choose a set $END_{i+1}$ of size $n_0$ such that each $v\in END_{i+1}$ is the endpoint of $n_0$ Hamilton paths of $G_{i+1}$ with distinct endpoints. We now use the edges of $G_{n,p_3}$ to find a vertex $x_{i+1}\in END_{i+1}$ such that there is an edge $yx_{i+1}$ of color $c_{i+1}$, where $y\neq x_i$ is an endpoint of one of the paths $Q_1,Q_2,\ldots,Q_{n_0}$. Similarly to the last time, suppose that $E(G_{n,p_3})=F_0$ where w.h.p. $|F_0|\ge \om n/5$. As we go through the edges of $F_0$, we see that we find such an edge with probability at least $\g$. It follows that for each $\bm \in \bM_\b$, the probability that we fail to find the required edge after $\log^2n$ steps is at most $(1-\g)^{\log^2n}$. Repeating this argument $r$ times we see that w.h.p. for each $\bm \in \bM_\b$, there are $n_0$ Hamilton paths of $G$ made up of correctly colored paths of length $m_1-1,m_2,\ldots,m_{r-1}$ plus one of $n_0$ Hamilton paths $H_1,H_2,\ldots,H_{n_0}$ of $G_r$, all with $x_r$ as an endpoint. 

We now do rotations in $G_1$, starting with $P_1$ and keeping the endpoint $y_1$ fixed and generate $n_0$ paths $J_1,J_2,\ldots,J_{n_0}$. We then search for an edge $y_rx_1$ of color $c_1$ such that $y_r$ is an endpoint of an $H_k$ and $x_1$ is an endpoint of a $J_l$. We can find one w.h.p. for each $\bm \in \bM_\b$ by examining $\log^2n$ edges of $F_0$, and we are done with the proof of Theorem \ref{th1}.

\section{Proof of Theorem \ref{th2}}\label{pth2} 
 Fix a vector $\bm\in\bM$. Clearly, there exists $j\in [r]$ such that $m_j \ge n/r$; without loss of generality, assume that $m_r \ge n/r$ (because any cyclic shift of the coordinates in $\bm$ is precisely a cyclic switching of the colors). We focus initially on the first $r-1$ colors and construct paths $P_j$ for each $j\in [r-1]$ so that we can construct the remaining long path $P_r$ using a somewhat similar strategy as before to glue all the paths together. 

We partition the vertex set $[n]$ into $V_1,\ldots, V_{r-1}, V_{r,1}, V_{r,2}, V_{r,3}$, where these sets are inductively defined as follows. For $j\in [r-1]$, the set $V_j$ consists of the interval of $m_j + \frac{n}{10r^2}$ vertices starting from $1 + \sum_{i\le j-1} |V_i|$. Each of $V_{r,1}$ and $V_{r,2}$ consists of the interval of $\frac{m_r}{3} + \frac{n}{20r^2}$ vertices starting from $1 + \sum_{i\le r-1} |V_i|$ and $1 + |V_{r,1}| + \sum_{i\le r-1} |V_i|$ respectively. Then, finally the set $V_{r,3}$ contains the remaining $\frac{m_r}{3} - \frac{n}{10r}$ vertices forming an interval ending at $n$. For each $j\in [r-1]$, we use the vertices in $V_j$ to construct a path of color $c_j$. We use $\cup_{i=1}^3 V_{r,i}$ for the last color $c_r$, in addition to some vertices transferred from outside this set. Let $\m_j$ be such that $|V_j| = \m_j n$ for $j\in [r-1]$. Let $\m = n^{-1}\brac{\frac{m_r}{3} - \frac{n}{10r}}$. 

We let $p_1=\frac{\log n+\log\log n+\om/2}{\a_{\min}n}$ and then let $p_2,p_3$ satisfy $1-p=(1-p_1)(1-p_2)(1-p_3)$ so that $p_2 = p_3 \approx \om/4\a_{\min}n$. For each $v\in[n]$ and $i=1,2,3$, let $d_{r,i}(v)=|\set{u \in V_{r,i} : uv \text{ has color } c_r \text{ in } G_{n,p_1}}|$. For each $v\in[n]$ and $j\in [r-1]$, let $d_j(v)=|\set{u \in V_j : uv \text{ has color } c_j \text{ in } G_{n,p_1}}|$. Finally, for each $v\in[n]$, let $d_r(v)=|\set{u \in V : uv \text{ has color } c_r \text{ in } G_{n,p_1}}|$ (note that this is different from $d_j(v), j\neq r$; the notation $d_r(v)$ denotes the number of $c_r$-colored edges incident to $v$ in $G_{n,p_1}$).

We now outline how we will construct a Hamilton cycle in several steps. Let now 
\begin{align*}
A_\bm&=\set{v:\exists j\in[r-1]: d_j(v)\leq \frac{\m_i\a_i \log n}{25} \;\;\; \text{or} \;\;\; \exists 1\leq i\leq 3: d_{r,i}(v)\leq \frac{\m\a_r \log n}{25}}.\\
B&=\set{v:d_r(v)\leq \frac{500r^4}{\a_{\min}}}.
\end{align*}

\begin{lemma}\label{addlemma}\ \\\vspace{-.5in}
\begin{enumerate}[(a)]
\item W.h.p. simultaneously, for all choices of \bm, every pair of vertices $u\in A_\bm$ and $v\in B$ are at distance at least three.
\item W.h.p., $d_r(v)\geq 2$ for all $v\in [n]$.
\end{enumerate}
\end{lemma}

This lemma is proved in Section \ref{xxx}.

We next describe the steps of our construction.
\begin{enumerate}[{\bf Step 1}]
\item For each $v\in A_{\bm}$, choose two neighbors $w_1,w_2\notin A_\bm$ of $v$ such that $vw_1$ and $vw_2$ have the color $c_r$ and let $Q_v$ be the path $w_1vw_2$. Then, move $v$, $w_1$, and $w_2$ to $V_{r,3}$. We will show in Section \ref{step1:Qv} that we can choose the pairs $w_1,w_2$ such that the paths in $\cQ=\set{Q_v}$  are vertex disjoint. 

After this step for $j\in [r-1]$, denote the new $V_j$ by $V'_j$, and for each $i=1,2,3$, denote the modified $V_{r,i}$ by $V'_{r,i}$.

\item Construct a path $P$ of the form $P_1 P_2 \ldots P_{r-1}$, where $P_j$ is a path using only the vertices of $V'_j$ as internal vertices, and with edges of color $c_j$. The path $P_j$ has length $m_j$ for $j \in [r-1]$. For all vertices in $\cup_{j=1}^{r-1} V'_j$ that are not used in $P$, place them into $V'_{r,3}$. 

Let $G_j$ denote the subgraph induced by the edges of color $c_j$ in $V'_j$, for $j\in [r-1]$.
Let $G_{r,i}$ denote the subgraph induced by the edges of color $c_r$ in $V'_{r,i}$, for $i=1,2,3$. With similar arguments as in Sections \ref{validation_of_step2} and \ref{step1:Qv}, for $i=1,2$, the graphs $G_{r,i}$ have minimum degree at least $\frac{\m \a_r \log n}{25}-360r^4\geq \frac{\m \a_{\min} \log n}{26}$ by construction. The same is true for the degrees in $G_{r,3}$, except for the vertices of $A_\bm$.

\item For each $i=1,2$, execute the rotation-extension algorithm on $G_{r,i}$ to find an almost Hamilton path $P_{r,i}$ and connect one end of $P_{r,1}$ to $P_1$ and one end of $P_{r,2}$ to $P_{r-1}$ so that there are linearly many choices for the other ends of $P_{r,1}$ and $P_{r,2}$. Move all the unused vertices from $V'_{r,1}\cup V'_{r,2}$ to $V'_{r,3}$ and again perform the restricted rotation-extension algorithm on $G_{r,3}$ to ensure that for all $Q\in\cQ$, the current path either contains $Q$ or is vertex disjoint from $Q$ and build $P_{r,3}$ using the remaining vertices. Finally place $P_{r,3}$ in between $P_{r,1}$ and $P_{r,2}$ to complete the Hamilton cycle. 
\end{enumerate}

\subsection{Step 1: Construction of disjoint $Q_v$'s}\label{step1:Qv}
To show that the pairs $w_1,w_2$ can be chosen so that the paths in $\cQ=\set{Q_v}$  are vertex disjoint, we use a similar argument from the validation of Step 1 in the proof of Theorem \ref{th1}. Lemma \ref{addlemma}(b) implies that for each $v\in A_\bm$, we can choose two neighbors $w_1,w_2$ such that the edges $vw_1, vw_2$ have the color $c_r$. 
If $v\in B$, then arbitrarily pick two such neighbors $w_1,w_2$; Lemma \ref{addlemma}(a) implies that the chosen vertices neither are in $A_\bm$ nor have any neighbors in $A_\bm$. 

If $v\in A_\bm \setminus B$, then $d_r(v)> \frac{500r^4}{\a_{\min}}$. Next, we apply {\bf B3} to $c=\frac{\a_i}{\a_{min}}, S = V_j$ for each $1 \le j \le r-1$ and also to $c=\frac{\a_r}{\a_{min}}, S = V_{r,i}$ for each $1 \le i \le 3$; and then sum the results, to obtain an upper bound on the number of vertices in $A_\bm$ which are at distance at most 10 from $v$. Applying {\bf B3} for a given $1\le j\le r-1$ gives a bound of $\frac{10rn\a_{min}}{\a_i |V_j|} \le 100r^3$ (since $|V_j| \ge n/10r^2$), and similarly for a given $1\le i\le 3$, it gives a bound of $\frac{10rn\a_{min}}{\a_r |V_{r,i}|} \le 40r^2$ (since $|V_{r,i}| \ge n/4r$).
By summing these, we have that the number of vertices in $A_\bm$, which are at distance at most 10 from $v$, is at most $120r^4$. Thus, a similar argument as in Section \ref{validation:step1} using Property {\bf B7} shows that the vertex $v$ has at most $240r^4$ neighbors $w$ such that $w$ has at least one neighbor in $A_\bm\setminus \set{v}$.
Thus, for each $v\in A_\bm \setminus B$, we have at least $\frac{500r^4}{\a_{\min}}- 360r^4 \ge 140r^4$ choices of neighbors (to pick $w_1,w_2$ from) that are disjoint from already chosen path endpoints. 

\subsection{Step 2: Construction of paths $P_1,P_2,\ldots,P_{r-1}$}
To obtain $P_1,P_2,\ldots,P_{r-1}$ we use the following lemma. (See Ben-Eliezer, Krivelevich, and Sudakov \cite{BKS}.) 

\begin{lemma} \label{lemBKS}
Let $G$ be a connected graph with at least $N$ vertices such that for every pair of disjoint sets $S$ and $T$ with $|S|=|T|=M$, there is an edge joining $S$ and $T$. Then for every $v\in V(G)$, there is a path of length $N-2M$ with one endpoint $v$.
\end{lemma}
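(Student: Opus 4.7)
The plan is to prove Lemma \ref{lemBKS} by running a depth-first search (DFS) from $v$ and identifying a moment of the search at which the DFS stack contains a long path starting at $v$. Run DFS on $G$ starting at $v$, and at each moment partition the vertex set into three pieces: $A$, the vertices already popped from the stack; $B$, the vertices currently on the stack; and $C$, the vertices not yet touched. I will rely on two standard features of DFS: (i) the vertices of $B$ listed in stack order always form a path in $G$ beginning at $v$, and (ii) throughout the search there is no edge between $A$ and $C$, because a vertex is popped only once all of its neighbors lying in $C$ have already been pushed.

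Next I will exploit the hypothesis of the lemma. Note that $|C|$ is non-increasing during the search, since vertices leave $C$ only when they are pushed. If $N \le 2M$ the conclusion is vacuous, so I may assume $N > 2M$. Then $|C|$ starts at $N-1 \ge 2M$ and, because $G$ is connected, the DFS visits every vertex, so eventually $|C| = 0$. In particular there is a first moment $t$ at which $|C|$ is about to drop from $M$ to $M-1$. I claim that at time $t$ one must have $|A| \le M - 1$: otherwise one could choose disjoint $M$-subsets $S \subseteq A$ and $T \subseteq C$, and the hypothesis of the lemma would yield an edge between $S$ and $T$, contradicting invariant (ii).

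Combining these at time $t$ gives
\[
|B| \;=\; N - |A| - |C| \;\ge\; N - (M-1) - M \;=\; N - 2M + 1,
\]
so the stack at that moment forms a path in $G$ on at least $N - 2M + 1$ vertices, starting at $v$, i.e.\ of length at least $N - 2M$. Truncating to its first $N - 2M + 1$ vertices yields a path of exactly the required length with one endpoint $v$. The only substantive point is the simultaneous use of the DFS invariant and the expansion hypothesis; the remaining steps are routine bookkeeping and I do not anticipate a genuine obstacle.
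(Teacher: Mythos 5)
Your DFS argument is correct: the two invariants (the stack spans a path rooted at $v$; no edges between popped and untouched vertices) are valid for the one-neighbor-at-a-time formulation of DFS you describe, and the counting at the moment $|C|=M$ is right. The paper gives no proof of this lemma, deferring to the cited reference of Ben-Eliezer, Krivelevich, and Sudakov, and your argument is precisely the standard DFS proof from that line of work, so there is nothing to add.
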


We need the following lemma which enables us to apply Lemma \ref{lemBKS} on the graphs $G_i$ for $i \in [r-1]$.

\begin{lemma}\label{BKS}
W.h.p. simultaneously, for all choices of \bm, for each $i\in [r-1]$, we have the following: 
\begin{enumerate}[(a)]
\item $G_i$ is connected and 
\item There is an edge in $G_i$ between every pair of disjoint sets $S$ and $T$ with $|S| = |T| = n_1 = \frac{n(\log \log n)^2}{\log n}$.
\end{enumerate}
\end{lemma}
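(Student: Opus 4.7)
The plan is to establish the two parts of Lemma \ref{BKS} separately via first-moment calculations on the random graph $G_i$. Conditional on the partition of $[n]$, $G_i$ is distributed as an Erd\H{o}s--R\'enyi random graph $G(|V_i|, p\a_i)$ with $|V_i| = n/(2(\s+1)) = \Theta(n)$ and $p\a_i \ge \a_i(\log n + \log\log n + \om/2)/(\a_{\min} n)$. The partition is determined by $\s$ (at most $r-1$) and the labeling of colors in $J$, so there are only $O(r!)$ distinct configurations; thus it suffices to prove each claim for a single fixed configuration and then take one final cheap union bound over configurations to handle the ``simultaneously for all $\bm$'' quantifier.

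First I would dispatch part (b) by a standard first-moment union bound. For a fixed disjoint pair $S, T \subseteq V_i$ with $|S|=|T|=n_1 = n(\log\log n)^2/\log n$, the probability that no $G_i$-edge joins them is at most $(1-p\a_i)^{n_1^2} \le \exp(-p\a_i n_1^2) = \exp(-\Om(n(\log\log n)^4/\log n))$. The number of such pairs is at most $\binom{|V_i|}{n_1}^2 \le \exp(O(n(\log\log n)^3/\log n))$ via $\binom{N}{k}\le(eN/k)^k$. Since the probability exponent beats the entropy exponent by an extra $\log\log n$ factor, the union bound yields a failure probability of $\exp(-\text{polylog}(n))$, which is comfortably $o(n^{-r})$.

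For part (a), I would use the classical union-bound argument for connectedness of $G(m,q)$: bound the expected number of subsets $S \subseteq V_i$ of size $1 \le k \le |V_i|/2$ that form a union of components (i.e.\ have no $G_i$-edge leaving $S$). Each such $S$ contributes at most $\binom{|V_i|}{k}(1-p\a_i)^{k(|V_i|-k)}$ to the expectation. I would split the sum over $k$ into a small-$k$ range, dominated by the isolated-vertex count $|V_i|(1-p\a_i)^{|V_i|-1}$, and a moderate-$k$ range where $\binom{|V_i|}{k}(1-p\a_i)^{k|V_i|/2}$ decays geometrically against the $\binom{|V_i|}{k}$ entropy term.

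The main technical obstacle is the small-$k$ regime of part (a): the expected $G_i$-degree is $|V_i|p\a_i = \a_i(\log n + \log\log n + \om/2)/(2(\s+1)\a_{\min})$, so its constant factor relative to $\log|V_i|$ depends sensitively on the ratios $\a_i/\a_{\min}$ and on $\s$, and the $\log\log n + \om/2$ slack built into $p_1$ must be carefully shepherded through the isolated-vertex calculation to push the failure probability to $o(1)$. Once the per-configuration estimate is in hand, a final union bound over the $O(r!)$ partition configurations promotes it to the desired ``w.h.p.\ simultaneously for all $\bm$'' statement.
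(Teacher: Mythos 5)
Your part (b) is exactly the paper's argument: it is Property {\bf B5} of Lemma \ref{lem0}, proved by the same $\binom{n}{n_1}^2(1-p)^{n_1^2}$ union bound, and your accounting (entropy $\exp(O(n(\log\log n)^3/\log n))$ versus probability $\exp(-\Omega(n(\log\log n)^4/\log n))$) is correct.

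Part (a), however, has a genuine gap, and it is precisely the obstacle you flag at the end. The graph $G_i$ ($i\le\sigma$) lives on $|V_i'|\approx n/(2(\sigma+1))$ vertices with edge probability $p_1\alpha_i$, so the expected degree is about $\frac{\alpha_i\log n}{2(\sigma+1)\alpha_{\min}}$, while the connectivity threshold for a graph on $|V_i'|$ vertices sits at expected degree $\log|V_i'|=(1-o(1))\log n$. The deficit is a \emph{multiplicative constant}: already for $r=2$, $\alpha_1=\alpha_2=1/2$, $\sigma=1$ the expected degree is $\approx\frac{1}{4}\log n$, and the unconditioned random graph $G(|V_i'|,p_1\alpha_i)$ then has about $n^{3/4}$ isolated vertices w.h.p. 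No amount of shepherding the additive $\log\log n+\om/2$ slack in $p_1$ through the isolated-vertex calculation can close a constant-factor gap in the exponent, so the classical first-moment connectivity bound you propose simply does not converge in the small-$k$ range. The lemma is nevertheless true because $G_i$ is \emph{not} a plain $G(|V_i'|,p_1\alpha_i)$: the vertices of $V_i$ with atypically few color-$c_i$ neighbours inside $V_i$ belong to $A_\bm$ and are removed to $V_r$ in Step 1, so every surviving vertex of $V_i'$ has $G_i$-degree $\Omega(\log n)$. The paper exploits exactly this: it proves (via the edge-density bound of Lemma \ref{density}, as in Lemma \ref{lem2}(a)--(b)) that every set $S\subseteq V_i\setminus A_\bm$ of size at most $\Theta(n)$ satisfies $|N_i(S)|\ge 3|S|$, concludes that every component of $G_i$ has linear size, and then invokes the part-(b)/{\bf B5} property to rule out two linear-size components. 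To repair your proof you would have to replace the classical union bound by this expansion-plus-(b) route (or some other argument that uses the minimum-degree guarantee on $V_i'$); as written, your small-$k$ analysis fails. A secondary, smaller issue: your reduction to ``$O(r!)$ configurations'' ignores that $V_i'$ depends on $A_\bm$ and hence on $\bm$, which is another reason to argue via deterministic expansion properties that hold simultaneously for all $\bm$ rather than via the distribution of a fixed $G(m,q)$.
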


This will be proved in Section \ref{pbks}.

We condition on the high probability events in the above lemmas. We assume that $m_j \ge 1$ for all $j\in [r]$ because otherwise, we are just dealing with fewer colors. Fix a starting vertex $v_1 \in V'_1$. It follows from Lemmas \ref{lemBKS} and \ref{BKS} that there is a path $P_1$ of length $m_1$ starting at $v_1$ and using only the vertices in $V'_1$, all of whose edges have color $c_1$ (we use Lemma \ref{lemBKS} with $N=m_1+\frac{n}{10r^2}-|A_\bm|\sim m_1+\frac{n}{10r^2}$ and $M=n_1$). Suppose then that we have constructed paths $P_1,P_2,\ldots,P_k,k<r-1$ where $P_{j-1},P_{j}$ share an endpoint and the edges of $P_j$ are colored $c_j$ for $1\leq j\leq k$. (We take $P_0$ to be an endpoint of $P_1$.) Let $u_k$ denote the endpoint of $P_k$ that is not in $P_{k-1}$ and $v_{k+1}$ be a $c_{k+1}$-neighbor of $u_k$ in $V'_{k+1}$ (such a neighbor exists because of the fact that $V'_k$ only contains vertices outside of $A_\bm$). Then it follows from Lemmas \ref{lemBKS} and \ref{BKS} that there is a path $P_{k+1}$ of length $m_{k+1}$ starting at $u_{k}$ and using only the vertices in $V'_{k+1} \cup \{u_k\}$, all of whose edges have color $c_{k+1}$. We end the path $P_{r-1}$ with a vertex $u_{r-1} \in V'_{r-1}$. To summarise, we have constructed a path, the concatenation of $P_1, P_2, \ldots, P_{r-1}$, starting from $v_1$ and ending at $u_{r-1}$ such that the edges of $P_j$ are colored with $c_j$. 

\subsection{Step 3: Construction of $P_r$ and the Hamilton cycle}\label{caveat}
Our goal in this section is to construct a path $P_r$ between the vertices $v_1$ and $u_{r-1}$ using edges of color $c_r$. And using all of the unused vertices outside of $\cup_{j=1}^{r-1} P_j$ as the internal vertices. Step 3 can be validated in a similar way as was done in Sections \ref{step4} and \ref{step5}, and note that Lemma \ref{lem2} continues to hold for the graphs $G_{r,i}$, $i\in [3]$ (we need a minor modification as mentioned later). There is one caveat in that we want the path $P_r$ to start with the fixed vertex $v_1$ and end with the fixed vertex $u_{r-1}$. In contrast, we previously had linearly many options for starting or ending vertices. Thus, for $i=1,2$, we aim to first construct a family $\cP_{r,i}$ of linearly many paths using almost all vertices in $V'_{r,i}$ such that the paths in $\cP_{r,1}$ start with $v_1$, the paths in $\cP_{r,2}$ start with $u_{r-1}$, and the other endpoints are pairwise disjoint. This will lead to a situation similar to the proof of Theorem \ref{th1}, where we can finish by constructing a final path $P_{r,3}$ using the vertices in $V'_{r,3}$ and all unused vertices from $\cup_{i\in [2]} V'_{r,i}$ that connect some $P_{r,1}\in \cP_{r,1}$ and $P_{r,2}\in \cP_{r,2}$. 

To this end, observe that the vertex $v_1$ has at least $\ell_0=\th\log n$ neighbors $v\in V'_{r,1}$ such that $vv_1$ is an edge of color $c_r$. Indeed, since $v_1\notin A_\bm$, we know that $v_1$ has at least $\th\log n$ neighbors in $V'_{r,1}$, where $\th=\frac{\m\a_r}{26}$ (by an argument used in the description of Step 1). Fix such a set $N_1 \subseteq V'_{r,1}$ of size exactly $\th\log n$ such that for all $v\in N_1$, there is an edge $vv_1$ of color $c_r$. By a similar argument, we fix another set $N_2 \subseteq V'_{r,2}$ of size exactly $\th\log n$ such that for all $u\in N_2$, there is an edge $uu_{r-1}$ of color $c_r$. 

The subgraph $G'_{r,1}$ obtained from $G_{r,1}$ by deleting the vertices in $N_1$ satisfies the expansion properties of Section \ref{exprop}. (the proof of Lemma \ref{lem2} is still valid; note that the removal of the set $N_1$ cannot decrease the minimum degree of $G'_{r,1}$ by more than 2, because otherwise there would be a copy of $K_{2,3}$ in $G_{n,p_1}$ contradicting {\bf B7}.) Denote by $G'$ the subgraph of $G_{n,p_1} \cup G_{n,p_2}$ induced by the vertex set $V(G'_{r,1})$. Using the same arguments as in Sections \ref{step4} and \ref{step5}, we can find a set $END$ of $n_0$ vertices $v$ for which there are at least $n_0$ Hamilton paths in $G'$ with one end point $v$ and otherwise distinct endpoints. The probability there is no edge of color $c_r$ from $N_1$ to $END$ is then at most $\brac{1-p_2}^{n_0\th\log n}\leq n^{-\th\omega/5\a_{\min}}=o(n^{-r})$. Thus, there is such an edge $vv'$ with $v\in N_1$ and $v'\in END$. This completes the construction of the family $\cP_{r,1}$ of $n_0$ paths starting with the vertices $\set{v_1,v,v'}$, ending at distinct vertices (denote this set of vertices by $Z_1$), and using every vertex in the set $V(G'_{r,1}) \setminus N_1$. 

A similar argument provides us with a vertex $u\in N_2$ and a family $\cP_{r,2}$ of $n_0$ paths starting with the vertices $\set{u_{r-1},u}$, ending at distinct vertices (denote this set of vertices by $Z_2$), and using every vertex in the set $V(G'_{r,2}) \setminus N_2$; with a failure probability $o(n^{-r})$. At this stage move every vertex from $N_1 \setminus \set{v}$ and $N_2 \setminus \set{u}$ to $V'_{r,3}$, and denote this modified set by $V''_{r,3}$. 

Finally, Lemma \ref{lem3} holds for the subgraph of $G_{n,p}$ induced by $V''_{r,3}$ because the arguments in Section \ref{step4} remain valid. Finally, with a similar argument to that in Section \ref{step5}, we can join the endpoint of a Hamiltonian path $P_{r,3}$ on $V''_{r,3}$ to the open end of a path in $\cP_{r,1}$ and to the open end of another path in $\cP_{r,2}$; with a failure probability $o(n^{-r})$. This finishes the proof of Theorem \ref{th2}.

\section{Proof of Theorems \ref{th3} and \ref{th4}}\label{pth34}
We can consider both theorems simultaneously. Let $q=p(1-p)$ and note that $\gnqa$ satisfies the conditions of Theorems \ref{th1} and \ref{th2}. 

Let $c$ be a fixed coloring that we will use to color edges. Now let $e_i=\set{u_i,v_i},i=1,2,\ldots,N=\binom{n}{2}$ be an arbitrary ordering of the edges of $K_n$. We couple the construction of $\gnqa,q=p(1-p)$ with $\dnpastar$, a subgraph of $\dnpa$. For each $i$, we generate two independent Bernouilli random variables, $B_{u_i,v_i}$ and $B_{v_i,u_i}$, each with probability of success $p$. If exactly one of these variables has value one, we include the corresponding directed edge in $\dnpastar$ and give it the color $c(e_i)$. 

Consider the following sequence $\G_0,\G_1,\ldots,\G_N$ of random edge-colored digraphs. In $\G_i$, for $j\leq i$, we first tentatively include $(u_j,v_j)$ and $(v_j,u_j)$ independently with probability $p$ and include the corresponding edge only if exactly one is chosen. In which case give it color $c(e_j)$. For $j>i$ we include both $(u_j,v_j),(v_j,u_j)$ with probability $q$ and neither of $(u_j,v_j),(v_j,u_j)$ with probability $1-q$. 

Now $\G_0$ is distributed as $\gnqa$ and $\G_N$ is distributed as a subgraph of $\dnpa$. We argue that
\beq{MCD}{
\Pr(\G_i\in\cF)\geq \Pr(\G_{i+1}\in\cF)\text{ for }0\leq i<N.
}

Given \eqref{MCD} we see that we have Theorems \ref{th3} and \ref{th4}. So let us verify \eqref{MCD}. Following \cite{mcd}, we condition on the existence or non-existence of $(u_j,v_j)$ or $(v_j,u_j)$ for $j\neq i+1$, in both models, $\G_i,\G_{i+1}$. Let $\cC$ denote this conditioning. Then, one of (a), (b), (c) below occurs:
\begin{itemize}
\item[(a)] There is a desiredly colored Hamilton cycle (in both $\G_i,\G_{i+1}$) that does not use either of  $(u_{i+1},v_{i+1})$ or $(v_{i+1},u_{i+1})$.
\item[(b)] Not (a) and there exists a desiredly colored Hamilton cycle if at least one of $(u_{i+1},v_{i+1})$ or $(v_{i+1},u_{i+1})$ is present, or
\item[(c)] There does not exist a desiredly colored Hamilton cycle even if both of $(u_{i+1},v_{i+1})$ and  $(v_{i+1},u_{i+1})$ are present.
\end{itemize}

(a) and (c) give the same conditional probability of Hamiltonicity in $\Gamma_i,\Gamma_{i+1}$, 1 and 0 respectively. In $\Gamma_{i}$, (b) happens with probability $q$. In $\Gamma_{i+1}$, we consider two cases (i) exactly one of $(u_{i+1},v_{i+1}),(v_{i+1},u_{i+1})$ yields Hamiltonicity and in this case the conditional probability is again $q$ and (ii) either of $(u_{i+1},v_{i+1}),(v_{i+1},u_{i+1})$ yields Hamiltonicity and in this case the conditional probability is  $1-(1-p)^2-p^2=2q$. Note that we will never require that {\bf both} $(u_{i+1},v_{i+1}),(v_{i+1},u_{i+1})$ occur. In summary, we have proved that
\beq{star}{
\Pr(\dnpastar\in\cF)\leq \Pr(G^\a_{n,p}\in\cF)=o(1).
}

\section{Structural lemmas}\label{fin} 
In this section, we prove the various structural properties of random graphs used throughout this paper. We begin with the following: let $0<\g<1$ and $g=\rdown{1/\g}$ and let $W_1,W_2,\ldots,W_g$ be consecutive intervals in $[n]$ where $|W_i|=\rdown{\g n}$ for $1\leq i<g$. Let $d_{i,j}(v)$ denote the number of neighbors $w$ of vertex $v$ in $W_j$ such that $c(vw)=c_i$. Here $G=G_{n,p_1}$ with $p_1\approx \frac{c\log n}{n},c\geq 1$. Let 
\begin{align*}
A^*&=\set{v:\exists i\in[r],j\in[g-1]:d_{i,j}(v)\leq \frac{\g c\a_i\log n}{20}}.\\
B_1&=\set{v:d(v)\leq \frac{5r^2}{\g\a_{\min}}}.\\
B_2&=\set{v:d_r(v)\leq \frac{5r^2}{\g\a_{\min}}}.
\end{align*}

\begin{lemma}\label{basic}\ 
\begin{enumerate}[(a)]
\item In Section \ref{pth1} with $c=1$, $\g=\b/10$, we have that $A_\bm \subseteq A^*$ and $B_1 = B$.
\item  In Section \ref{pth2} with $c=1/\alpha_{\min}$, $\g=1/100r^2$, we have that $A_\bm\subseteq A^*$ and $B_2 = B$.
\end{enumerate}
\end{lemma}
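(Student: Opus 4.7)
The plan is to verify each of the four sub-statements by direct substitution and a clean block-counting argument based on the fact that every $V_i$ we care about is an interval in $[n]$ long enough to swallow many consecutive $W_j$-blocks.

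For the identities $B_1=B$ in (a) and $B_2=B$ in (b), the proof is simple arithmetic: with $\g=\b/10$, the threshold $\frac{5r}{\g\a_{\min}}$ defining $B_1$ evaluates to $\frac{50r}{\b\a_{\min}}$, which is exactly the threshold for $B$ in Section~\ref{pth1}; with $\g=1/(80r)$, the threshold defining $B_2$ reproduces the one for $B$ in Section~\ref{pth2}. Both sides use the same notion of degree ($d(v)$ in (a) and $d_r(v)$ in (b)), so the sets are literally equal.

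For the inclusions $A_\bm\seq A^*_\bm$ I argue contrapositively. Fix $v\notin A^*_\bm$; this says $d_{i,j}(v)>\frac{\g c\a_i\log n}{20}$ for every $i\in[r]$ and every $j\in[g-1]$. I must deduce $d_i(v)>\frac{\m_i\a_i\log n}{25}$ for each $i$ indexing the definition of $A_\bm$. Since $V_i$ is an interval of length $\m_in$ and the blocks $W_1,\ldots,W_{g-1}$ tile a prefix of $[n]$ of length at least $n-O(\g n)$ into pieces of length $\lfloor\g n\rfloor$, a straightforward count shows that $V_i$ fully contains at least $\m_i/\g-O(1)$ of these blocks; the $O(1)$ loss comes from the two endpoints of $V_i$ and from its possible overflow into the final block $W_g$ that is excluded from the definition of $A^*_\bm$. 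Summing the guaranteed lower bound on $d_{i,j}(v)$ over the fully-contained blocks gives
\[
d_i(v)>\left(\frac{\m_i}{\g}-O(1)\right)\cdot\frac{\g c\a_i\log n}{20}.
\]

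The quantitative heart of the argument is that $\m_i/\g\ge10$ in both settings: in (a), from $\m_i\ge\b$ and $\g=\b/10$; in (b), from $\m_i\ge 1/(8r)$ (since $m_i>n/(4r)$ and $\sum_{j>\s}m_j\le n$ for $i>\s$) together with $\g=1/(80r)$. This tenfold slack absorbs the $O(1)$ boundary correction, so the right-hand side of the display is a definite constant fraction of $\frac{c\m_i\a_i\log n}{20}$; with $c=1$ in (a) and $c=1/\a_{\min}\ge1$ in (b), this comfortably beats $\frac{\m_i\a_i\log n}{25}$, forcing $v\notin A_\bm$. The only place where any care is required is the bookkeeping of the boundary loss; everything else is substitution, so I expect no serious obstacle beyond checking that the constants $1/20$, $1/25$, $\b/10$, and $1/(80r)$ interact as advertised.
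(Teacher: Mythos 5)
Your proposal is correct and takes essentially the same route as the paper: $B_1=B$ and $B_2=B$ by direct substitution of $\gamma$, and $A_{\bm}\subseteq A^*_{\bm}$ via the contrapositive block count, using that each relevant $V_i$ is an interval of $[n]$ containing at least $\mu_i/\gamma\ge 10$ full blocks $W_j$ (from $\mu_i\ge\beta$, $\gamma=\beta/10$ in (a), and $\mu_i\ge 1/8r$, $\gamma=1/80r$ in (b)), so the boundary loss costs only the factor $20/25$. The one caveat --- shared with the paper, which asserts without justification that the uncovered part of $V_i$ has at most $2\gamma n$ elements --- is that for $V_r$ the loss can reach three blocks' worth (a partial block at one end plus all of $W_g$, whose size can approach $2\gamma n$), so ``comfortably beats $\frac{\mu_i\alpha_i\log n}{25}$'' is really ``exactly matches'' (and narrowly fails in the three-block case); both your argument and the paper's need a trivial adjustment of constants, e.g.\ taking $\gamma$ half as large, and likewise both gloss over the fact that the stated thresholds for $B_2$ ($400r^2/\alpha_{\min}$) and for $B$ in Section \ref{pth2} ($400r/(\beta\alpha_{\min})$) do not literally coincide, which is evidently a typo in the paper.
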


\begin{proof}
It is clear that $B_1 = B$ in (a) and $B_2=B$ in (b).

(a) If $v \in A_\bm$, then there is some $i \in [r]$ such that $d_i(v) \le \frac{\m_i \a_i \log n}{25}$, i.e., there are at most $\frac{\m_i \a_i \log n}{25}$ many $c_i$-colored edges between $v$ and $V_i$. Recall that $V_i$ was defined so that it consists of $m_i \ge \b n$ consecutive elements from $[n]$. Hence, there are $j,k$ such that $W_{j} \cup W_{j+1} \cup \cdots \cup W_{j+k-1} \subseteq V_i$ and $V_i \setminus (W_{j} \cup W_{j+1} \cup \cdots \cup W_{j+k-1})$ has at most $\b n/5$ elements. Thus, 
$$k\g n \geq |W_{j} \cup W_{j+1} \cup \cdots \cup W_{j+k-1}| \ge m_i - \b n/5 \ge 4m_i/5.$$
Suppose, for the sake of contradiction, that $v \notin A^*$. Then, $d_{i,j+l}(v) > \frac{\g \a_i \log n}{20}$ for all $l= 0,1,\ldots,k-1$. Thus, we have the following (recall that $m_i = \m_i n$):
\begin{align*}
d_i(v) \ge \sum_{l=0}^{k-1} d_{i,j+l}(v) > \frac{k\g \a_i \log n}{20} \ge \frac{\m_i \a_i \log n}{25},
\end{align*}
giving us a contradiction. 

(b) The proof is essentially the same as part (a).
\end{proof}

\begin{lemma} \label{lemma1}\ 
\begin{enumerate}[(a)]
\item If $c=1$, $\g=\b/10$, then w.h.p. every pair of vertices $u \in A^*$ and $v \in B_1$ are at distance at least three.
\item If $c=1/\a_{\min}$, $\g=1/100r^2$, then w.h.p. every pair of vertices $u \in A^*$ and $v \in B_2$ are at distance at least three.
\end{enumerate}
\end{lemma}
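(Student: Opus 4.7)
The plan is a first-moment argument on bad configurations. Call a pair $(u,v)$ with $u\neq v$ \emph{bad} if $u\in A^*_\bm$, $v\in B$ and $\dist(u,v)\leq 2$; I split into two subcases depending on whether $\dist(u,v)=1$ (an edge $uv$) or $\dist(u,v)=2$ (a witness common neighbour $w$) and bound the expected number of bad configurations of each type.

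The first ingredient is the marginal tail bounds. For each vertex $v$ and each coordinate $(i,j)$, $d_{i,j}(v)$ is binomial with mean $\g c\a_i\log n+o(\log n)$, so a Chernoff bound at ratio $1/20$ gives $\Pr(d_{i,j}(v)\leq \g c\a_i\log n/20)\leq n^{-0.8\g c\a_i+o(1)}$. Union-bounding over the $O(\log n)$ pairs $(i,j)$ yields $\Pr(v\in A^*_\bm)\leq n^{-\eta_1+o(1)}$ with $\eta_1=0.8\g c\a_{\min}$. A parallel Poisson-tail estimate, applied to $d(v)$ in case~(a) and to $d_r(v)$ in case~(b) (using that $|V_r|=\Omega(n)$, since $r$ is fixed and $r\notin J$), gives $\Pr(v\in B)\leq n^{-\eta_2+o(1)}$ for some $\eta_2>0$.

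The main step is the joint probability calculation. For a fixed ordered pair $u\neq v$ I would condition on $uv\in E$ (probability $p$) and on its colour. Given this, the event $u\in A^*_\bm$ is determined by the edges from $u$ to $[n]\sm\set{v}$, while $v\in B$ is determined by the edges from $v$ to $[n]\sm\set{u}$; these two edge sets are disjoint, so the events become conditionally independent, and the conditioning perturbs each binomial count by at most $1$, costing only a constant factor in the tail bound. This gives
\[
\Pr(u\in A^*_\bm,\,v\in B,\,uv\in E)\leq p\cdot\Pr(u\in A^*_\bm)\cdot\Pr(v\in B)\cdot O(1),
\]
and summing over ordered pairs yields expected distance-$1$ bad count at most $n^2 p\cdot n^{-\eta_1-\eta_2+o(1)}$. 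The distance-$2$ subcase is parallel: condition on the pair of edges $uw,wv$, observe that the edge sets from $u$ to $[n]\sm\set{w}$ and from $v$ to $[n]\sm\set{w}$ are again disjoint, and sum over triples $(u,w,v)$ to obtain an expected count of at most $n^3 p^2\cdot n^{-\eta_1-\eta_2+o(1)}$.

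The main obstacle will be ensuring these expectations are small enough in the regime of each part. Case~(a) is routine: neither $A^*_\bm$ nor $B_1$ depends on $\bm$, so an $o(1)$ bound suffices and follows directly from $\eta_1,\eta_2>0$ once the parameters of Lemma~\ref{basic}(a) are plugged in. Case~(b) is the delicate one, because $B_2$ depends on $\bm$ through $V_r$ and the subsequent application (Lemma~\ref{addlemma}) requires the conclusion to hold simultaneously over $\bm\in\bM$; the first-moment estimate must then be tightened by exploiting that $V_r$ ranges over only $O(n^2)$ distinct intervals as $\bm$ varies (rather than over all $\Theta(n^r)$ choices of $\bm$), and by keeping the polylog corrections in the Chernoff bounds visible so that the extra factors can be absorbed. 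Conceptually the heart of the proof is the independence-modulo-shared-edges observation; the remaining work is parameter accounting to push the exponent below the required threshold.
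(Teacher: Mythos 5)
Your proposal is correct and takes essentially the same approach as the paper: the paper's proof is precisely this first-moment bound, summing over the connecting path of length one or two and multiplying the binomial tail probabilities for $u\in A^*_\bm$ and $v\in B$, which are conditionally independent because they are determined by disjoint edge sets once the connecting path is fixed. Your concern about uniformity over $\bm$ in case (b) is sensible but does not alter the method; note also that $A^*_\bm$ is defined via the fixed intervals $W_j$ and hence does not actually depend on $\bm$, which is exactly why the paper introduces it in Lemma \ref{basic}.
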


\begin{proof}\ 
(a) The probability that there are vertices $u \in A^*$ and $v \in B_1$ at distance at most two can be bounded by
\mults{
\sum_{i=1}^2 n^{i+1} \bfrac{c \log n}{n}^i \sum_{k=1}^{5r^2/\g \a_{\min}} \binom{n-1-i}{k} p^k (1-p)^{n-1-i-k}  \sum_{i=1}^{r} \sum_{j=1}^{g-1} \sum_{k=1}^{\g c \a_i\log n/20} \binom{\g n}{k}(p\a_i)^{k}\brac{1-p\a_i}^{\g n -2-i-k} \\
\leq O\brac{n\times \log^2n\times \sum_k\log^kn\times n^{-1}\times \sum_{i,j,k}\bfrac{e^{1+o(1)}\g\a_i \log n}{k}^k\times n^{-\Omega(1)}}= o(1).
}

(b) This is similar.
\end{proof}

\subsection{Proof of Lemma \ref{lem0}}\label{plem0}

\begin{enumerate}[{\bf B1}]

\item Let $Z=|B(p,S)|$ and $L=\log n$ and $A=\frac{c|S|\log n}{20n}$. Then,

\begin{align}
\E\brac{\binom{Z}{L}}&\leq \binom{n}{L}\brac{\sum_{i=0}^A\binom{|S|-L}{i}p^i(1-p)^{|S|-L-i}}^L\label{B1a}\\
&\leq \binom{n}{L}\brac{2\binom{|S|}{A}p^A(1-p)^{|S|-A}}^L\nn\\
&\leq \binom{n}{L}\brac{2\brac{\frac{|S|e}{A}\cdot\frac{c\log n}{n}\cdot e^{o(1)}}^A e^{-c|S|\log n/n}}^L\nn\\
&\leq \binom{n}{L}((21e)^{\log n/20}n^{-1+o(1)})^{Lc|S|/n}\nn\\
&\leq \frac{n^{L-2c|S|L/3n}}{L!}.\nn
\end{align}

{\bf Explanation for \eqref{B1a}:} Having chosen a set $X$ of $L$ vertices, we bound the probability that the set is contained in $B(p,S)$ by the probability that the vertices in $X$ each have at most $A$ neighbors in $S\setminus X$.

Thus, from the Markov inequality,
\[
\Pr(Z\geq n^{1-c|S|/4n})\leq \frac{\E\brac{\binom{Z}{L}}}{\binom{n^{1-c|S|/4n}}{L}} \leq \frac{n^{L-o(L)-2c|S|L/3n}}{n^{L-c|S|L/4n}}\leq n^{-c|S|L/3n}=o(n^{-r}).
\]

\item Let $\ell=c\log n/20$.
\begin{align*}
\Pr(\exists v,w\in\SMA:\dist(v,w)\leq 2) &\leq \sum_{j=2}^3 n^jp^{j-1}\brac{\sum_{i=0}^{\ell}\binom{n-j}{i}p^i(1-p)^{n-j-i}}^2 \\
&\leq \left(cn\log n + c^2 n \log^2 n\right) \cdot (n^{-2/3})^2=o(1).
\end{align*}

\item If this property fails then there is a connected set $T$ of at most $t_0=1+\frac{100rn}{c|S|}$ vertices that contains a set $T_1$ of size $t_1=\frac{10rn}{c|S|}$ vertices, each of which has at most $s_0=\frac{c|S|\log n}{20n}$ neighbors in $S\setminus T$. The probability of this can be bounded by
\begin{align*}
t_0\binom{n}{t_0}t_0^{t_0-2}p^{t_0-1}\binom{t_0}{t_1} \brac{\sum_{i=0}^{s_0}\binom{|S|}{i}p^i(1-p)^{|S|-t_0-i}}^{t_1} &\leq t_0n(c\log n)^{t_0}\brac{2\bfrac{|S|ep}{s_0}^{s_0}e^{-c|S|\log n/n}}^{t_1}\\
&=t_0n(c\log n)^{t_0}(2(20e)^{s_0}e^{-20s_0})^{t_1} \\
&\leq t_0n(c\log n)^{t_0}e^{-15s_0t_1}=o(n^{-r}).
\end{align*}

\item Proof of this can be found in Chapter 3 of \cite{book}.

\item Let $s_1=\frac{n(\log\log n)^2}{\log n}$. The probability of the existence of a pair of disjoint sets $S_1,S_2$ of size $s_1$ with no edge between them can be bounded by 
\[
\binom{n}{s_1}^2(1-p)^{s_1^2}\leq \bfrac{n^2e^{2}}{s_1^2e^{s_1p}}^{s_1}=o(n^{-r}).
\]
\item[{\bf B6,7}] These follow from standard first-moment calculations.
\end{enumerate}

\subsection{Proof of Lemma \ref{nlem}}\label{yy}
(a) This follows from Property {\bf B1}.

(b) This follows from Property {\bf B3}.

(c) This follows from Part (a) of Lemmas \ref{basic} and \ref{lemma1}.

\subsection{Proof of Lemma \ref{lem2}}\label{xx}
We first prove the following lemma bounding the edge density of small sets.
\begin{lemma}\label{dense}
In $G_{n,p}$, with $p \approx \frac{c\log n}{n}$, the following holds w.v.h.p.:
\begin{enumerate}[{\bf P1}]
\item For each $S\subseteq [n]$ satisfying $\log^{1/2} n \le |S|\leq n/\log^2 n$, we have that $e(S)\leq 3|S|$, where $e(S)$ denotes the number of edges contained in $S$.
\item For each $S\subseteq [n]$ satisfying $|S|\leq\r n$ with $\r \le 1/100$, we have that $e(S)\leq e\r^{1/2} c|S|\log n$. 
\end{enumerate}
\end{lemma}
\begin{proof}
In the respective cases, the probability that there exists a set $S$ with more edges than claimed can be bounded by
\begin{enumerate}[{\bf P1}]
\item \begin{align*}
\sum_{s=\log^{1/2}n}^{n/\log^2n}\binom{n}{s}\binom{\binom{s}{2}}{3s}p^{3s} &\leq \sum_{s=\log^{1/2}n}^{n/\log^2n}\brac{\frac{ne}{s}\cdot\bfrac{s^2ec\log n}{6sn}^3}^s \\
&\leq \sum_{s=\log^{1/2}n}^{\log n}\frac{1}{n^s} + \sum_{s=\log n}^{n/\log^2n}\bfrac{c^3}{2\log n}^s \\
&=o(n^{-r}).
\end{align*}
\item \begin{align*}
\sum_{s=e\r^{1/2} c\log n}^{\r n}\binom{n}{s}\binom{\binom{s}{2}}{e\r^{1/2} cs\log n}p^{e\r^{1/2} cs\log n} &\leq \sum_{s=e\r^{1/2} c\log n}^{\r n}\brac{\frac{ne}{s}\cdot\bfrac{s^2ec\log n}{2e\r^{1/2} csn\log n}^{e\r^{1/2} c\log n}}^s \\
&\leq \sum_{s=e\r^{1/2} c\log n}^{\r n}\brac{\bfrac{s}{n}^{1-2/e\r^{1/2} c\log n}\cdot\frac{1}{2\r^{1/2}}}^{ce\r^{1/2} s\log n} \\
&\leq \sum_{s=e\r^{1/2} c\log n}^{\r n}\brac{\bfrac{s}{n}^{1/2} \cdot\frac{1}{2\r^{1/2}}}^{ce\r^{1/2} s\log n} \\
&\leq \sum_{s=e\r^{1/2} c\log n}^{\r n}\brac{\frac{1}{2}}^{c^2e^2\r \log^2 n} \\
&=o(n^{-r}).
\end{align*}
\end{enumerate}
\end{proof}
Armed with Lemma \ref{dense}, we can proceed to the proof of Lemma \ref{lem2}.

(a) Suppose that there exists $S$ with $1 \le |S|\leq n/\log^4 n$ that does not satisfy Part (a) of Lemma \ref{lem2}. Let $T=N_i(S)$. Note that $|S\cup T| \ge \frac{\m_{min}\a_{min}\log n}{26}$ because the vertices of $G_i$, not in $A_{\bm}$, have degree at least $\frac{\m_{min}\a_{min}\log n}{26}$ (which was deduced in Section \ref{validation_of_step2}). Then $|S\cup T|\leq |S|(1+\m_{\min}\a_{\min}\log n/1000)\leq \m_{\min}\a_{\min}|S|\log n/999$ and from our bounds on degrees, that $e(S\cup T)\geq |S|\m_{\min}\a_{\min}\log n/52 > 3|S \cup T|$, contradicting Lemma \ref{dense}(P1), with $c=1$.

(b) Suppose now that $S$ is a set with $n/\log^4 n\leq |S|\leq \m_{\min}^2\a_{\min}^2n/10^7$ that does not satisfy Part (b) of Lemma \ref{lem2}. Let $T=N_i(S)$. Then $|S\cup T|\leq 4|S|\leq 4\m_{\min}^2\a_{\min}^2n/10^7$ and $e(S\cup T)\geq |S|\m_{\min}\a_{\min}\log n/52 \ge |S \cup T|\m_{\min}\a_{\min}\log n/208$, contradicting Lemma \ref{dense}(P2), with $c=1$ and $\r = 4\m_{\min}^2\a_{\min}^2/10^7$.

(c) Suppose that $S$ is an arbitrary connected component of the graph induced by color $i$ on the vertex set $V_i \setminus A_\bm$. If $|S|\le n/\log^4 n$, then by (a) we know that $|N_i(S)|\ge |S|\m_{min}\a_{min}\log n/1000$. However, by Lemma \ref{nlem}(b), we know that $|A_\bm \cap N_i(S)| \le 10r^2|S|/\a_{min}\m_{min}$. Thus, there are vertices $u\in S$ and $v \in V_i \setminus A_\bm$ such that $uv$ is an $i$-colored edge, contradicting the assumption that $S$ is a connected component. Thus, we can assume that every connected component of the induced graph using edges of color $i$ on $V_i \setminus A_\bm$ has size more than $n/\log^4 n$. It follows from (b) and Lemma \ref{nlem}(a) that every component has size at least $s_0=\m_{\min}^2\a_{\min}^2n/10^7$. Now apply Property {\bf B5} with $c=\a_{\min}$ to show that there cannot be two such large components.

\subsection{Proof of Lemma \ref{addlemma}}\label{xxx}
Part (a) of Lemma \ref{addlemma} follows from Part (b) of Lemmas \ref{basic} and \ref{lemma1}. Part (b) of Lemma \ref{addlemma} follows easily from the fact that the graph induced by the color $c_r$ has the distribution $G_{n,p'}$, where $p'=\a_r \cdot p_1 \ge \frac{\log n + \log\log n + \om/2}{n}$.

\subsection{Proof of Lemma \ref{BKS}}\label{pbks}
Connectivity follows as in the proof of Part (c) of Lemma \ref{lem2} in Section \ref{xx}. The other condition follows from Property {\bf B5}.

\section{Concluding remarks}\label{ConcRem}
The ultimate goal is to understand the thresholds for the existence of varying patterns in edge-colored random graphs. The hardest question seems to be to find the threshold for the existence of arbitrary patterns. Periodic patterns were dealt with in \cite{AF} and \cite{EFK}.

Leaving this problem aside, we can still ask for the likely value of $hcp(G_{n,p})$ for all values of $p$ between the threshold for Hamiltonicity and the value in Theorem \ref{th2}.

{\bf Acknowledgement:} We are grateful to a reviewer for pointing to an error in a previous version and also for an excellent review of our paper.

\end{document}